\documentclass[12pt,reqno]{amsart}
\usepackage{hyperref}
\usepackage{amscd,amssymb,amsfonts,amsbsy,amsmath,verbatim,color, mathrsfs}
\usepackage{pst-node}
\usepackage{tikz-cd}
\usepackage{graphicx,cite}

\usepackage{float}
\usepackage{tikz,ifthen,bm}
\usepackage{verbatim,mathdots,caption,subcaption,epstopdf}
\usetikzlibrary{intersections}
\usetikzlibrary{calc}

\newtheorem{thm}{Theorem}[section]

\newtheorem{lem}[thm]{Lemma}

\newtheorem{prop}[thm]{Proposition}
\newtheorem{exam}{Example}[section]
\theoremstyle{definition}
\newtheorem{defi}{Definition}[section]

\numberwithin{equation}{section}

\DeclareMathOperator{\ess}{ess}

\newcommand{\Real}{\mathbb{R}}

\newcommand{\R}{\Real}

\def\supp{{\textup{\textrm{supp}}}}

\makeatletter

\newcommand{\Rmnum}[1]{\expandafter\@slowromancap\romannumeral #1@}
\makeatother

\textheight 9.5in \textwidth 6.35in \topmargin -0.5in \oddsidemargin 0in \evensidemargin 0in
\parskip 1ex



\begin{document}
	\baselineskip=17pt
	\setcounter{figure}{0}
	
	\title[Hodge Theorem for Kre\u{\i}n-Feller operators]
	{Hodge Theorem for Kre\u{\i}n-Feller operators on compact Riemannian manifolds}
	\date{\today}
	\author[S.-M. Ngai]{Sze-Man Ngai}
	\address{Key Laboratory of High Performance Computing and Stochastic Information
		Processing (HPCSIP) (Ministry of Education of China), College of Mathematics and Statistics, Hunan Normal University,
		Changsha, Hunan 410081, China, and Department of Mathematical Sciences\\ Georgia Southern
		University\\ Statesboro, GA 30460-8093, USA.}
	\email{smngai@georgiasouthern.edu}
	
	\author[L. Ouyang]{Lei Ouyang}
	\address{Key Laboratory of High Performance Computing and Stochastic Information
		Processing (HPCSIP) (Ministry of Education of China, College of Mathematics and Statistics, Hunan Normal University, Changsha, Hunan 410081, China.}
	\email{ouyanglei@hunnu.edu.cn}

	\subjclass[2010]{Primary: 28A80, 58A14; Secondary: 35J05, 58A10, 35P10}
	\keywords{Riemannian manifold; Laplacian; Kre\u{\i}n-Feller operator; compact embedding; Hodge theorem.}

	\thanks{The authors were supported in part by the National Natural Science Foundation of China grant 12271156 and Construct Program of the Key Discipline in Hunan Province.}

	\begin{abstract}
		For an open set $\Omega$ in a compact smooth oriented Riemannian $n$-manifold and a positive finite Borel measure $\mu$ with support contained in $\overline{\Omega}$, we define an associated Kre\u{\i}n-Feller operator $\Delta_\mu^k$ on $k$-forms by assuming the Poincar\'e inequality. Kre\u{\i}n-Feller operators on $\R^n$ have been studied extensively in fractal geometry. Using results established by the authors, we obtain sufficient conditions for $\Delta_{\mu}^k$ to have compact resolvent. Under these conditions, we prove the Hodge theorem for forms, which states that there exists an orthonormal basis of $L^2\big(\bigwedge^k\Omega,\mu\big)$ consisting of eigenforms of $-\Delta_\mu^k$, the eigenspaces are finite-dimensional, and the eigenvalues of $-\Delta_\mu^k$  are real, countable, and increasing to infinity. One of these sufficient conditions is that the $L^\infty$-dimension  of $\mu$ is greater than $n-2$. Our result extends the classical Hodge theorem to Kre\u{\i}n-Feller operators.
	\end{abstract}
	
	\maketitle
	\section{Introduction}\label{S:IN}
	\setcounter{equation}{0}
	
	For a bounded open set $\Omega\subseteq\R^n$ and a positive finite Borel measure $\mu$ with support contained in $\overline{\Omega}$,  if $\mu$ satisfies the Poincar\'e inequality on $\Omega$, then there exists a Dirichlet Laplacian $\Delta_\mu$ defined by $\mu$ especially in connection with fractal geometry (see, e.g., \cite{Hu-Lau-Ngai_2006}). These Laplacians are also known as Kre\u{\i}n-Feller operators. The  properties of the Kre\u{\i}n-Feller operators $\Delta_\mu$ have been studied extensively by many authors, because of its connection with analysis on fractals
	(see \cite{Hu-Lau-Ngai_2006, Freiberg_2003, Freiberg_2005, Freiberg_2011, Freiberg-Zahle_2002, Fujita_1987, Ngai_2011, Ngai-Tang-Xie_2018, Ngai-Tang-Xie_2020, Ngai-Xie_2020, Ngai-Xie_2021, Bird-Ngai-Teplyaev_2003, Chen-Ngai_2010, Deng-Ngai_2015, Pinasco-Scarola_2019, Pinasco-Scarola_2021, Kessebohmer-Niemann_2022-2,Tang-Ngai_2022,Kessebohmer-Niemann_2022-3} and the references therein). For the classical Laplacian $\Delta$ defined on functions on Riemannian manifolds, eigenfunctions, eigenvalues, eigenvalue estimates, and heat kernel estimates have also been studied by many authors and have played important roles in geometric analysis (see, e.g., \cite{Li-Yau_1986,Minakshisundaram-Pleijel_1949,Schoen-Yau_1994,Grigor'yan_2009,Peter_2012,Chavel_1984} and references therein). The Laplacian $\Delta$ extends from functions (i.e., $0$-forms) to differential forms of arbitrary degree, namely, the Laplace-Beltrami operator $\Delta^k:=dd^*+d^*d:\Gamma^\infty(\bigwedge^k M)\rightarrow \Gamma^\infty(\bigwedge^k M)$, where the definitions of $d$, $d^*$, and $\Gamma^\infty(\bigwedge^k M)$ are given in Section \ref{S:Pre}. The spectrum of the Laplace-Beltrami operator $\Delta^k$ on differential forms has been investigated by many authors (see \cite{Chanillo-Treves_1997,Mantuano_2008,Gordon-Rossetti_2003} and references therein). In the early 1930s, Cartan discovered that the geometric and topological invariants of manifolds could be obtained directly from the calculation of differential forms, thus finding a deep connection between analysis and topology. In 1931, de Rham proved that de Rham cohomology groups are isomorphic to singular cohomology groups. This theorem gives the relationship between topology and smooth structures on manifolds. In 1933, Hodge established the Hodge theory while studying algebraic geometry. Hodge's theory took de Rham's work on de Rham's cohomology one step further. Hodge proved that there is a unique harmonic form in every de Rham cohomology class (see \cite{Hodge_1941}). Since harmonic forms are solutions to elliptic partial differential equations on manifolds, this theorem establishes fundamental connections between analysis, geometry, and topology on manifolds. In 1964, Atiyah and Bott defined elliptic complex as a generalization of de Rham complex. The fundamental theorem of elliptic complexes is also a generalization of the Hodge theorem (see, e.g., \cite[Theorem 5.2]{Wells_2008}). The validity  of the Hodge theorem for forms is related to the Hodge theorem and the fundamental theorem concerning elliptic complexes. The Hodge theorem for forms can be stated as follows.  Let $(M,g)$ be a compact connected oriented Riemannian manifold. There exists an orthonormal basis of $L^2(\bigwedge^kM)$ consisting of eigenforms of the Laplacian on $k$-forms $\Delta^k$. All the eigenvalues are nonnegative, each eigenvalue has finite multiplicity, and the eigenvalues accumulate only at infinity (see, e.g., \cite[Theorem 1.30]{Rosenberg_1997}). The authors extended Hodge theorem for functions to Laplacian $\Delta_\mu$ (see \cite[Theorem 2.2]{Ngai-Ouyang_2023}). Hodge's theorem plays an important role in the spectral theory on Riemannian manifolds and motivated this work.
		
		 In this paper, we study the Hodge theorem for Kre\u{\i}n-Feller operators defined on a compact oriented smooth Riemannian $n$-manifold. Our first goal is to
		  generalize the Kre\u{\i}n-Feller operator $\Delta_{\mu}$  on functions on a Riemannian manifold to an operator $\Delta^k_\mu$ on $k$-forms on a Riemannian manifold.	Our second goal is to prove an analog of Hodge's theorem for $\Delta_\mu^k$ acting on forms. Our third goal is to prove that if $M$ is compact and $\mu$ is absolutely continuous with respect to the Riemannian volume form and has a positive and bounded density, then each cohomology class of  forms in $W^{1,2}_0(\bigwedge^kM)$ contains a unique $\mu$-harmonic $k$-form.

We have the following main theorems.  The definitions of (PI),  $\underline{\operatorname{dim}}_{\infty}(\mu)$, $\operatorname{dom}(\mathcal{E})$, $\Delta_{\mu}^k$, and $\operatorname{dom}(\Delta_{\mu}^k)$ in the following three theorems are given in Section \ref{S:Pre}.
\begin{thm}\label{thm:1.1}
	Let $n\geq 1$, $M$ be a compact connected Riemannian $n$-manifold, and $\Omega \subseteq M$ be an open set. Let $\mu$ be a positive finite Borel measure on $M$ such that $\operatorname{supp}(\mu) \subseteq \overline{\Omega}$ and $\mu(\Omega)>0$. Assume that $\underline{\operatorname{dim}}_{\infty}(\mu)>n-2$. Then (PI) holds. Moreover, the embedding $\operatorname{dom}(\mathcal{E}) \hookrightarrow L^2\big(\bigwedge^k\Omega,\mu\big)$ is compact, where $0\leq k\leq n$.
\end{thm}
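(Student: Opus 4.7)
The plan is to reduce the $k$-form statement to the $0$-form (function) case, which was established by the authors in \cite{Ngai-Ouyang_2023}. The two reduction tools are a finite coordinate atlas with subordinate partition of unity, and the pointwise equivalence---coming from compactness of $M$---between the Riemannian norms on $k$-forms and the Euclidean norms on their local coefficient functions.

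Since $M$ is compact, I fix a finite atlas $\{(U_\alpha,\varphi_\alpha)\}_{\alpha=1}^N$ together with a subordinate smooth partition of unity $\{\chi_\alpha\}$. On each chart, a $k$-form $\omega$ has a unique representation $\omega|_{U_\alpha}=\sum_{|I|=k} f_{I,\alpha}\,dx^I$. Because $M$ is compact, the metric tensor $g$ and its inverse are uniformly bounded on every $U_\alpha$; hence the pointwise inner product $\langle\omega,\omega\rangle_g$ and the Riemannian energy density of $\omega$ are comparable---up to constants depending only on the fixed atlas---to the Euclidean quantities $\sum_{|I|=k}|f_{I,\alpha}|^2$ and $\sum_{|I|=k}|\nabla f_{I,\alpha}|^2$ respectively. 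This equivalence is what allows me to transfer function-level estimates to form-level estimates.

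To establish (PI) for $k$-forms, I decompose $\omega=\sum_\alpha \chi_\alpha\omega$, apply the chart-level equivalence to each summand, and invoke the authors' function-level Poincar\'e inequality from \cite{Ngai-Ouyang_2023}---which is valid under $\underline{\operatorname{dim}}_{\infty}(\mu)>n-2$---to each scalar component $\chi_\alpha f_{I,\alpha}$. Summing the resulting bounds over the finite collection of multi-indices $I$ of length $k$ and chart indices $\alpha$ yields (PI) for $\omega$, with a constant depending on $N$, $\binom{n}{k}$, and the atlas. For the compact embedding, I start from a bounded sequence $\{\omega_m\}\subseteq\operatorname{dom}(\mathcal{E})$; the localization produces, for each pair $(\alpha,I)$, a bounded sequence of scalar functions in the function-level domain of $\mathcal{E}$. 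Applying the function-level compact embedding from \cite{Ngai-Ouyang_2023} to each pair and extracting a diagonal subsequence over the finite index set $\{(\alpha,I)\}$ gives a subsequence of $\{\omega_m\}$ that converges in $L^2\bigl(\bigwedge^k\Omega,\mu\bigr)$.

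The principal obstacle is verifying that the component-by-component Euclidean estimates in each chart can be glued, via the partition of unity, into genuine estimates for the intrinsic Riemannian norms on forms globally on $\Omega$. Compactness of $M$ is essential here: it renders the metric, its inverse, and the finitely many chart transition Jacobians uniformly bounded, so that the linear mixing of coefficients under coordinate changes does not destroy the norm equivalence. A secondary point is checking that multiplication by $\chi_\alpha$ preserves membership in $\operatorname{dom}(\mathcal{E})$ and that the condition $\supp(\mu)\subseteq\overline{\Omega}$ is compatible with the localized estimates; both are routine once the bilateral norm equivalence on each chart is established.
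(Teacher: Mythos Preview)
Your overall strategy---localize via a finite atlas with partition of unity, decompose a $k$-form into its scalar coefficient functions, and invoke the function case from \cite{Ngai-Ouyang_2023}---is exactly the mechanism the paper uses. The paper packages the $n\ge 2$ case slightly differently: it first proves a Maz'ja-type criterion (Theorem~\ref{thm:3.11}) giving an \emph{if and only if} condition on $\mu$ for relative compactness of the unit ball in $L^q(\bigwedge^kM,\mu)$, then deduces the $\underline{\dim}_\infty$ statement (Theorem~\ref{thm:3.12}); the $n=1$ case is handled separately via a compact embedding into $\Gamma^C(\bigwedge^k\overline{\Omega})$. Your direct route bypasses the Maz'ja intermediate step, which is legitimate but forfeits the two-sided characterization.

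There is, however, a genuine gap in your derivation of (PI). You assert that the ``Riemannian energy density'' of $\omega$ is pointwise comparable to $\sum_I|\nabla f_{I,\alpha}|^2$. If by energy density you mean $|d\omega|^2+|d^*\omega|^2$, this is \emph{not} a pointwise metric bound: in local coordinates $d$ and $d^*$ act on the coefficients $f_{I,\alpha}$ as first-order operators whose principal symbols combine to only part of the full gradient, and the difference involves zeroth-order terms (derivatives of the metric). What is true---and what the paper uses, citing \cite{Scott_1995,Morrey_1966}---is the equivalence of the \emph{integrated} Sobolev norms $\|\omega\|_{W^{1,2}}$ and $\|\omega\|_{W^{1,2}_{\mathcal A}}$ on a regular atlas; this is a Gaffney-type fact, not a consequence of bounded metric alone. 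Consequently, when you apply the scalar (PI) to $\chi_\alpha f_{I,\alpha}$ and sum, the product rule produces the cross term $|\nabla\chi_\alpha|^2|f_{I,\alpha}|^2$, so the right-hand side is controlled only by $\|\omega\|_{W^{1,2}}^2=\int(|\omega|^2+|d\omega|^2+|d^*\omega|^2)\,dV_g$, not by $\int(|d\omega|^2+|d^*\omega|^2)\,dV_g$ alone. To reach (PI) you still need a classical Poincar\'e inequality for forms, $\int_\Omega|\omega|^2\,dV_g\le C\int_\Omega(|d\omega|^2+|d^*\omega|^2)\,dV_g$ for $\omega\in\Gamma^\infty_c(\bigwedge^k\Omega)$, which is an independent ingredient you have not supplied. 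The compact-embedding half of your argument is fine, since there the target is only the full $W^{1,2}_0$-norm and the lower-order term causes no trouble.
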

As a result, we have the following theorem.
\begin{thm}\label{thm:1.2}
	Assume the same hypotheses of Theorem \ref{thm:1.1}.  Then there exists an
	orthonormal basis of $L^2\big(\bigwedge^k\Omega,\mu\big)$ consisting of  eigenforms of $\Delta_{\mu}^k$, where $0\leq k\leq n$. The eigenvalues $\left\{\lambda_{m}\right\}_{m=1}^{\infty}$ satisfy $0\leq\lambda_{1} \leq \lambda_{2} \leq \cdots$, and if  $\dim(\operatorname{dom}(\mathcal{E})) =\infty$, then $\lim _{m \rightarrow \infty} \lambda_{m}=\infty$. Moreover, each eigenspace is finite-dimensional.
\end{thm}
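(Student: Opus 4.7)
The plan is to derive Theorem \ref{thm:1.2} as a standard consequence of Theorem \ref{thm:1.1} via the spectral theorem for self-adjoint operators with compact resolvent. First, I would invoke Theorem \ref{thm:1.1} to obtain (PI) and the compactness of the embedding $\operatorname{dom}(\mathcal{E}) \hookrightarrow L^2\big(\bigwedge^k\Omega,\mu\big)$. By the construction in Section \ref{S:Pre}, the form $\mathcal{E}$ is a nonnegative, densely defined, symmetric bilinear form on $L^2\big(\bigwedge^k\Omega,\mu\big)$; (PI) guarantees that $\mathcal{E}_1(\cdot,\cdot):=\mathcal{E}(\cdot,\cdot)+\langle\cdot,\cdot\rangle_{L^2(\mu)}$ turns $\operatorname{dom}(\mathcal{E})$ into a Hilbert space, so $\mathcal{E}$ is closed. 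The Friedrichs (Kato) representation theorem then produces a unique nonnegative self-adjoint operator, which is precisely $-\Delta_\mu^k$ on its domain $\operatorname{dom}(\Delta_\mu^k)$.

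Next, I would show that $(-\Delta_\mu^k + I)^{-1}$ is a compact, self-adjoint operator on $L^2\big(\bigwedge^k\Omega,\mu\big)$. For any $f\in L^2\big(\bigwedge^k\Omega,\mu\big)$, let $u:=(-\Delta_\mu^k+I)^{-1}f\in\operatorname{dom}(\Delta_\mu^k)\subseteq \operatorname{dom}(\mathcal{E})$. Testing against $u$ yields
\begin{equation*}
\mathcal{E}_1(u,u)=\langle f,u\rangle_{L^2(\mu)}\leq \norm{f}_{L^2(\mu)}\norm{u}_{L^2(\mu)}\leq \norm{f}_{L^2(\mu)}\,\mathcal{E}_1(u,u)^{1/2},
\end{equation*}
so $(-\Delta_\mu^k+I)^{-1}$ is bounded from $L^2\big(\bigwedge^k\Omega,\mu\big)$ into $\operatorname{dom}(\mathcal{E})$ equipped with the $\mathcal{E}_1$-norm. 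Composing with the compact embedding of Theorem \ref{thm:1.1} shows that $(-\Delta_\mu^k+I)^{-1}$ is a compact operator on $L^2\big(\bigwedge^k\Omega,\mu\big)$; self-adjointness follows from self-adjointness of $-\Delta_\mu^k$ and the nonnegativity of its spectrum.

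Finally, I would apply the Hilbert-Schmidt spectral theorem to the compact self-adjoint operator $(-\Delta_\mu^k+I)^{-1}$: there is an orthonormal basis $\{\varphi_m\}$ of $L^2\big(\bigwedge^k\Omega,\mu\big)$ of eigenforms with eigenvalues $\mu_m\to 0$ if $\dim\operatorname{dom}(\mathcal{E})=\infty$, each nonzero eigenvalue having finite multiplicity. Setting $\lambda_m:=\mu_m^{-1}-1$ gives eigenforms of $-\Delta_\mu^k$ with $0\leq\lambda_1\leq \lambda_2\leq\cdots$; nonnegativity comes from $\mathcal{E}(\varphi_m,\varphi_m)=\lambda_m\norm{\varphi_m}_{L^2(\mu)}^2\geq 0$, finite multiplicity transfers directly, and $\lambda_m\to\infty$ in the infinite-dimensional case.

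I do not anticipate a substantive obstacle here; the theorem is essentially a packaging of the compact-resolvent argument, and all non-routine input (the Poincaré inequality and the compact embedding for $k$-forms) has already been absorbed into Theorem \ref{thm:1.1}. The one point requiring a little care is verifying that $\mathcal{E}$ is genuinely closed and that the Friedrichs construction recovers the operator $\Delta_\mu^k$ as defined in Section \ref{S:Pre}; this should be straightforward from the definitions given there, but it is the step I would double-check against the precise setup.
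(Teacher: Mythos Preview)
Your proposal is correct and follows the same route as the paper: the authors simply state that Theorem~\ref{thm:1.2} is a direct consequence of Theorem~\ref{thm:1.1} together with \cite[Theorem~B.1.13]{Kigami_2001}, which is precisely the compact-resolvent spectral theorem you have unpacked. You are spelling out in detail what the paper packages into a single citation, so there is no substantive difference in approach.
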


\begin{defi}
Let $M$ be a Riemannian $n$-manifold and let $\mu$ be a positive finite Borel measure on $M$ such that $\operatorname{supp}(\mu) \subseteq M$. The {\em space of harmonic $k$-fields} is defined as
	\begin{align*}
		\widetilde{\mathcal{H}}^k(M):=\Big\{\omega\in W_0^{1,2}\big({\bigwedge}^kM\big):d\omega=d^*\omega=0\Big\},
	\end{align*}
	where $d$ and $d^*$ are defined in \eqref{eq:d} and \eqref{eq:ad}, respectively. The {\em space of harmonic $k$-forms}, denoted $\mathcal{H}^k(M)$,  is defined as
\begin{align*}
\mathcal{H}^k(M):=\Big\{\omega\in \operatorname{dom}(\Delta^k):\Delta^k\omega=0\Big\}.
\end{align*}
	Let
\begin{align*}
	\mathcal{H}^k_{\mu}(M):=\big\{\omega\in \operatorname{dom}(\Delta_{\mu}^k):\Delta_{\mu}^k\omega=0\big\}.
\end{align*}
The element of $\mathcal{H}^k_{\mu}(M)$ are called {\em $\mu$-harmonic $k$-forms}.

	\end{defi}
Assume that $\Omega=M$ and   $\mu$ is absolutely continuous with a positive density. The following theorem generalizes the classical Hodge theorem.

\begin{thm}\label{thm:1.3}
	Let $M$ be a compact Riemannian $n$-manifold. Let $\mu$ be a positive finite measure that is absolutely continuous with respect to the Riemannian volume form and has a positive and bounded  density. Then each cohomology class of  forms in $W^{1,2}_0(\bigwedge^kM)$ contains a unique $\mu$-harmonic $k$-form.
\end{thm}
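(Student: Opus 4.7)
The plan is to identify the space of $\mu$-harmonic $k$-forms with the classical space of harmonic $k$-forms on $M$ and then transfer the classical Hodge decomposition. Since $\mu=\rho\,d\mathrm{vol}$ with $\rho$ positive and bounded above, there is a constant $C>0$ such that $\mu(B(x,r))\leq C r^n$ for all $x\in M$ and sufficiently small $r$. Hence $\underline{\operatorname{dim}}_\infty(\mu)\geq n>n-2$, so Theorems \ref{thm:1.1} and \ref{thm:1.2} apply: $-\Delta_\mu^k$ has a discrete spectrum with finite-dimensional eigenspaces, and in particular $\mathcal{H}^k_\mu(M)$ is finite-dimensional.

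The next step is to verify that $\mathcal{H}^k_\mu(M)=\widetilde{\mathcal{H}}^k(M)$. If $\omega\in\operatorname{dom}(\Delta_\mu^k)$ satisfies $\Delta_\mu^k\omega=0$, the defining identity for $\Delta_\mu^k$ gives $\mathcal{E}(\omega,\phi)=0$ for every admissible test form $\phi$; taking $\phi=\omega$ forces $\|d\omega\|^2+\|d^*\omega\|^2=0$, so $\omega\in\widetilde{\mathcal{H}}^k(M)$. The reverse inclusion is immediate from the definition of $\mathcal{E}$, so $\mathcal{H}^k_\mu(M)=\widetilde{\mathcal{H}}^k(M)$. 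On the closed manifold $M$ these also coincide with the classical smooth harmonic $k$-forms by elliptic regularity.

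With this identification in hand I would prove the Hodge statement in two parts. For uniqueness, if $\eta\in\mathcal{H}^k_\mu(M)$ is exact, say $\eta=d\alpha$ with $\alpha\in W^{1,2}_0(\bigwedge^{k-1}M)$, then using $d^*\eta=0$,
\begin{equation*}
\|\eta\|^2_{L^2(\mathrm{vol})}=\int_M\langle d\alpha,\eta\rangle\,d\mathrm{vol}=\int_M\langle\alpha,d^*\eta\rangle\,d\mathrm{vol}=0,
\end{equation*}
so $\eta=0$. For existence, since $\Omega=M$ is a closed manifold so that $W^{1,2}_0=W^{1,2}$, the classical $L^2$ Hodge decomposition applied to a closed $\omega\in W^{1,2}(\bigwedge^k M)$ yields $\omega=h+d\alpha+d^*\beta$ with $h$ classical harmonic and $\alpha,\beta$ in $W^{1,2}$ by elliptic regularity. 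Applying $d$ gives $dd^*\beta=0$; pairing with $\beta$ forces $\|d^*\beta\|^2=0$, hence $\omega=h+d\alpha$ with $h\in\mathcal{H}^k_\mu(M)$ by the previous step.

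The main technical point will be the rigorous identification $\mathcal{H}^k_\mu(M)=\widetilde{\mathcal{H}}^k(M)$: although $\Delta_\mu^k$ is defined by a weighted identity mixing the Riemannian volume (on the energy side) with $\mu$ (on the test side), the zero set of $\Delta_\mu^k$ is governed solely by the bilinear form $\mathcal{E}$, whose kernel is $\ker d\cap\ker d^*$, and this identification is what allows the classical theory to be invoked directly. Once this is settled, the rest of the argument is essentially the classical Hodge argument, with the only subtlety being to keep track of the $W^{1,2}_0$ regularity of the representatives.
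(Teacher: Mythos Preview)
Your approach is correct and essentially the same as the paper's: establish $\mathcal{H}^k_\mu(M)=\widetilde{\mathcal{H}}^k(M)$ (hence equal to the classical harmonic forms) and then invoke the classical Hodge theorem; the paper packages this as Proposition~\ref{prop:1} and cites Morrey's Theorem~7.4.4. One point you should make explicit is that $\mathcal{N}_k=\{0\}$, equivalently $\operatorname{dom}(\mathcal{E})=W^{1,2}_0(\bigwedge^kM)$: since $d\mu=\rho\,dV_g$ with $\rho>0$, $\|\iota(u)\|_{L^2(\mu)}=0$ forces $u=0$ $V_g$-a.e.\ (this is the paper's Lemma~\ref{prop:5.1}). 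Without it your ``reverse inclusion is immediate'' step is incomplete, because an element of $\widetilde{\mathcal{H}}^k(M)\subseteq W^{1,2}_0$ is not a priori known to lie in $\operatorname{dom}(\mathcal{E})=\mathcal{N}_k^\perp$, and hence not a priori in $\operatorname{dom}(\Delta_\mu^k)$. Apart from that, your argument for the identification via $\mathcal{E}(\omega,\omega)=\|d\omega\|^2+\|d^*\omega\|^2=0$ is in fact more direct than the paper's, which in its Steps~1--2 detours through eigenform expansions of $\Delta^k$ before arriving at the same conclusion.
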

In Example \ref{exam:1}, we show that the conclusion of Theorem \ref{thm:1.3} need not hold if $\mu$ is not absolutely continuous with respect to the Riemannian volume form.

The rest of this paper is organized as follows. In Section \ref{S:Pre}, we summarize some notation and definitions that will be needed throughout the paper. Moreover, we define the Kre\u{\i}n-Feller operator $\Delta_\mu^k$ on $k$-forms on a Riemannian manifold by assuming the Poincar\'e inequality and study its basic properties.
 In Section \ref{S:L}, we generalize the compact embedding theorem of Maz'ja\cite{Maz'ja_1985} to differential forms on a Riemannian manifold. Moreover, we use $\underline{\dim}_{\infty}(\mu)$ to study the compactness of the embedding $\operatorname{dom}(\mathcal{E})\hookrightarrow L^{2}(\bigwedge^k\Omega, \mu)$, and prove Theorems \ref{thm:1.1} and \ref{thm:1.2}. In Section \ref{S:H}, we prove Theorem \ref{thm:1.3}.

	\section{ Preliminaries}\label{S:Pre}
	\setcounter{equation}{0}
	In this paper, all Riemannian manifolds are assumed to be smooth and oriented. Also, whenever the Riemannian
	distance function is involved, we assume that the manifold is connected. Let $(M,g)$ be a Riemannian $n$-manifold with Riemannian metric $g$. 	Let $\mu$ be a positive bounded regular Borel measure on $M$ with ${\rm supp}(\mu)\subseteq\overline{\Omega}$. In this section, we summarize some notation and definitions that
	will be used throughout this paper. Moreover, by assuming that $\mu$ satisfies the Poincar\'e inequality, we will define the Laplacian $\Delta_{\mu}^k$.
	\subsection{Notation}\label{S:No}
	We summarize some notation needed in the paper, details can be founded in \cite{Jost,Tu_2011,Petersen_2016,Warner_1983}.  Let $(M,g)$ be a Riemannian $n$-manifold with Riemannian metric $g$. Let $dV_g$ be the Riemannian volume form. In any oriented smooth coordinate chart $(U,(x^i))$, the Riemannian volume form has the following local coordinate expression on $U$:
	$$\,dV_g=\sqrt{\det g_{ij}}\, dx^1\wedge\cdots\wedge dx^n,$$
	where $\wedge$ 
	denotes the exterior product and the $g_{ij}$ are the components of $g$ in these coordinates.  For any $p\in M$, we let $T_pM$ be the {\em tangent space} of $M$ at $p$ and let  $TM:=\bigcup_{p\in M}T_p M$. Denote the {\em cotangent space} of $M$ at $p$ by $T^*_pM$. Let $${\bigwedge}^k T^*_pM:=\underbrace{T^*_pM\wedge\cdots\wedge T^*_pM}_{k\,\,\text{times}}$$
		and let ${\bigwedge}^k T^*M:=\bigcup_{p\in M}{\bigwedge}^k T^*_pM$.
		If $\pi: E\rightarrow M$ is  a vector bundle (resp. $C^\infty$ vector bundle), then the {\em vector space of sections} (resp. {\em vector space of $C^\infty$ sections})
		of $E$ is denoted by $\Gamma(E)$ (resp. $\Gamma^\infty(E)$).  Let $\bigwedge^kM$ denote the {\rm $k$-th exterior power of the cotangent bundle}. The space of sections (resp. $C^\infty$ sections) of $\bigwedge^kM$ is denoted by $\Gamma(\bigwedge^kM)$ (resp. $\Gamma^\infty(\bigwedge^kM)$).  Elements in  $\Gamma(\bigwedge^kM)$ (resp. $\Gamma^\infty(\bigwedge^kM)$) are called {\em  $k$-forms} (resp. {\em smooth $k$-forms}) on $M$.  For simplicity, we let
	$$\mathcal{I}_{k,n} := \{I:=(i_1,\ldots,i_k)|1\leq i_1<i_2<\cdots<i_k\leq n\}$$
	be the set of all strictly ascending multi-indices between 1 and $n$ of length $k$. Let $(U,x^1,\ldots,x^n)$ be a coordinate chart on $M$. Then every $k$-form $\omega\in \Gamma(\bigwedge^kM)$ can be written as a linear combination  
	\begin{align}\label{eq:1.1}
		\omega=\sum_{I\in \mathcal{I}_{k,n}} \omega_I\,dx^I,
	\end{align}
	where the coefficients $\omega_I:U\rightarrow \R$ are the components of $\omega$ in the coordinate chart and  $\{dx^I\}$ is an orthonormal basis for $\bigwedge^kT^*U$. A $k$-form $\omega=\sum_{I\in \mathcal{I}_{k,n}} \omega_I\,dx^I$ on $U$ is smooth if and only if the coefficient
	functions $\omega_I$ are smooth on $U$; a $k$-form $\omega$ on $M$ is smooth if and only if its restriction to any chart $(U,x^1,\ldots,x^n)$ is  smooth (see, e.g., \cite[Proposition 18.8]{Tu_2011}). Let $(U,x^1,\ldots,x^n)$ be a coordinate chart on $M$. The {\em exterior derivative} $d:\Gamma^\infty(\bigwedge^kM)\rightarrow \Gamma^\infty(\bigwedge^{k+1}M)$ is defined locally on $U$ by the formula
\begin{align*}
	d\omega:=\sum_{r=1}^n\sum_{I\in \mathcal{I}_{k,n}}\frac{\partial \omega_I}{\partial x^r}dx^r\wedge dx^I.
\end{align*}
 If $k\geq n$, we let $d\omega=0$. Note that $(d\omega)_p$ is independent of the choice of $U$
 	containing $p$ (see, e.g., \cite[Section 19]{Tu_2011}).

Define the {\em Hodge star operator} $\star:\bigwedge^kT_p^*M\rightarrow \bigwedge^{n-k}T_p^*M$ as a pointwise isometry as
	$$\star(e^{i_1}\wedge\cdots\wedge e^{i_k})=e^{j_1}\wedge\cdots\wedge e^{j_{n-k}},\,\,0\leq k\leq n,$$
where $e^{i_1}\wedge\cdots\wedge e^{i_k}\wedge e^{j_1}\wedge\cdots\wedge e^{j_{n-k}}=e^{1}\wedge\cdots\wedge e^{n}$ and $\{e^{1}\wedge\cdots\wedge e^{n}\}$ is a positively oriented orthonormal basis of $T_p^*M$. 
For $\omega,\eta\in \Gamma^\infty(\bigwedge^kM)$, define the {\em pointwise inner product} of $\omega$ and $\eta$ as
$$\langle \omega, \eta\rangle_g:=\star(\omega\wedge\star\eta),\,\,\text{i.e.,}\,\,\langle \omega(p), \eta(p)\rangle_g:=\star(\omega(p)\wedge\star\eta(p))\,\,\text{for all}\,\,p\in M.$$
For $\omega\in \Gamma^\infty(\bigwedge^kM)$, let $|\omega|:=\langle \omega, \omega\rangle_g^{1/2}$ be {\em the pointwise  norm} of $\omega$. For $\omega,\eta\in \Gamma^\infty(\bigwedge^kM)$,  define the {\em inner product on $\Gamma^\infty(\bigwedge^kM)$} 
	\begin{align*}
		\left\langle \omega, \eta\right\rangle:=\int_M \left\langle \omega, \eta\right\rangle_g\,d V_g.
	\end{align*}
 The inner product structure on $\Gamma^\infty(\bigwedge^kM)$ allows us to define the {\em adjoint} $d^*:\Gamma^\infty(\bigwedge^{k+1}M)\rightarrow \Gamma^\infty(\bigwedge^kM)$
of $d$ via the formula
\begin{align*}
\langle d^*\omega, \eta\rangle=\langle \omega, d\eta\rangle
\end{align*}
for all $\eta\in \Gamma^\infty(\bigwedge^kM)$, where $0\leq k\leq n$.
Define the {\em support} of $\omega\in \Gamma^\infty(\bigwedge^kM)$ as the closure of the set $\{p\in M: \omega(p)\neq 0\}$.
	
	 Let $\Omega\subseteq M$ be an open set. Let $\overline{\Omega}$ and $\partial \Omega$ (possibly empty) be the closure and boundary of $\Omega$, respectively. Note that if $\Omega=M$, then $\partial \Omega=\emptyset$. Let $\Gamma^C(\bigwedge^k\Omega)$, $\Gamma^{\infty}(\bigwedge^k\Omega)$, and $\Gamma_c^{\infty}(\bigwedge^k\Omega)$ denote, respectively,  continuous $k$-forms,  $C^{\infty}$ $k$-forms, and $C^{\infty}$ $k$-forms with compact support.

Let $L^2(\bigwedge^k\Omega)$ denote the space of measurable $k$-forms on $\Omega$ satisfying $\int_{\Omega}|\omega|^2\,dV_g<\infty$, with the inner product $\langle\cdot,\cdot\rangle$ and associated norm $\|\cdot\|_{L^2(\bigwedge^k\Omega)}$ defined respectively as 
	\begin{align*}
		\left\langle \omega, \eta\right\rangle_{L^2(\bigwedge^k\Omega)}:=\int_\Omega \left\langle \omega, \eta\right\rangle_g\,dV_g\quad\text{and}\quad \|\omega\|_{L^2(\bigwedge^k\Omega)}:=\Big(\int_{\Omega}|\omega|^2\,dV_g\Big)^{1/2},
\end{align*}
 where $\omega$ is measurable with respect to the Riemannian volume form and $|\omega|$ is defined a.e. in $\Omega$. Let $\mu$ be a positive finite Borel measure on $M$ with  ${\rm supp}(\mu)\subseteq\overline{\Omega}$. Let $L^2\big(\bigwedge^k\Omega,\mu\big)$ denote the space of $k$-forms  measurable with respect to $\mu$ on $\Omega$ satisfying $\int_{\Omega}|\omega|^2\,d\mu<\infty$, with the inner product $\langle\cdot,\cdot\rangle_\mu$ and associated norm $\|\cdot\|_{L^2(\bigwedge^k\Omega,\mu)}$ defined respectively as
\begin{align*}
	\left\langle \omega, \eta\right\rangle_{\mu}:=\int_\Omega \left\langle \omega, \eta\right\rangle_g\,d\mu\quad\text{and}\quad \|\omega\|_{L^2(\bigwedge^k\Omega,\mu)}:=\Big(\int_{\Omega}|\omega|^2\,d\mu\Big)^{1/2}.
\end{align*}
Then $L^2\big(\bigwedge^k\Omega,\mu\big)$ is a Hilbert space.
	We refer the reader to \cite{Morrey_1966,Scott_1995} for the following  definitions. We write $\Omega'\subset\subset \Omega$ if $\Omega'$ is an open subset of $\Omega$ and its closure $\overline{\Omega'}$ is contained on $\Omega$.  Define
	\begin{align*}
	L^1_{\rm{loc}}\Big({\bigwedge}^k\Omega\Big):=\Big\{u\in \Gamma\Big({\bigwedge}^k\Omega\Big):u\,\, \text{is measurable on}\,\,\Omega \,\, \text{and for any}&\\ \Omega'\subset \subset\Omega, u\in L^1\Big({\bigwedge}^k\Omega'\Big)&\Big\}.
\end{align*}
	Given a $k$-form $\omega=\sum_{I\in \mathcal{I}_{k,n}}\omega_Idx^I\in L^1_{\rm{loc}}(\bigwedge^kM)$,  we say that $\omega$ has a {\em generalized gradient} if, for each coordinate system, the pullbacks of the coordinate functions $\omega_I$ have generalized gradients in the sense that for all $\varphi\in C^\infty_c(\R^n)$, 
		\begin{align}\label{eq:g1}
		\int_{\R^n}\omega_I(x)\frac{\partial^\alpha\varphi}{\partial x^\alpha}(x)dx=(-1)^{|\alpha|}\int_{\R^n} \frac{\partial^\alpha \omega_I}{\partial x^\alpha}(x)\varphi(x)dx.
	\end{align}
 Let
\begin{align}\label{eq:so1}
	W\Big({\bigwedge}^k \Omega\Big):=\Big\{\omega\in L^1_{\rm{loc}}\Big({\bigwedge}^k\Omega\Big):\omega\,\,\text{has a generalized gradient}\Big\}.
	\end{align}
Now choose an
	atlas $\mathcal{A}$ for $\Omega$. For $(U, \varphi):=(U,(x^1,\ldots,x^n))\in \mathcal{A}$, write $\omega=\sum_{I\in \mathcal{I}_{k,n}}\omega_Idx^I\in L^1_{\rm{loc}}(\bigwedge^k\Omega)$. Define the
	{\em local gradient modulus} as
	\begin{align*}
		|\nabla_U\omega(x)|^2:=\sum_{I,l}\bigg|\frac{\partial \omega_I}{\partial x^l}(x)\bigg|^2
	\end{align*}
	and the {\em global gradient modulus} as
	\begin{align*}
		|\nabla\omega(x)|^2:=\sum_{U\in \mathcal{A}}\big|\nabla_U\omega(x)\big|^2.
	\end{align*}
	If  $\mathcal{A}$ is a locally finite
		cover of $\Omega$, then we may define the {\em (classical) Sobolev space} as
		\begin{align*}
			W^{1,2}_{\mathcal{A}}\Big({\bigwedge}^k\Omega\Big):=\Big\{\omega\in \Gamma\Big({\bigwedge}^k\Omega\Big):|\omega|,\,|\nabla\omega|\in L^2(\Omega)\Big\},
		\end{align*}
		with norm $\|\omega\|_{L^2(\bigwedge^k\Omega)}+\|\nabla\omega\|_{L^2(\bigwedge^k\Omega)}$.
		
		We now specify a class of atlases that yield equivalent Sobolev spaces. We say that 
		a coordinate system $(U, \varphi)$ is {\em regular}, if  $\overline{U}$ is compact and there is another
		system $(V, \psi)$ with $\overline{U}\subseteq V$ and $\psi|_U=\varphi$.

	Let $M$ be a compact Riemannian $n$-manifold. We may select finitely
	many systems $\{(V_i,\varphi_i)\}_{i=1}^N$ satisfying $M\subseteq \cup_{i=1}^NV_\alpha$.  By relabeling the $V_i$, we may choose a regular atlas $\{(U_i,\varphi_i|_{U_i})\}_{i=1}^N$ on $M$ such that $\overline{U}_{i}\subseteq V_{i}$ and $M\subseteq\cup_{i=1}^N U_i$.	

For $\omega=\sum_{I\in \mathcal{I}_{k,n}}\omega_Idx^I\in W({\bigwedge}^k\Omega)$, where $0\leq k<n$, we define its 
		{\em exterior derivative $d\omega$} as the $(k+1)$-form defined locally as
	\begin{align}\label{eq:d}
				d\omega:=\sum_{r=1}^n\sum_{I\in \mathcal{I}_{k,n}}\frac{\partial \omega_I}{\partial x^r}dx^r\wedge dx^I,
	\end{align}
where $\partial \omega_I/\partial x^r$ is defined as in \eqref{eq:g1}. For $k\geq n$, we let $d\omega=0$. 
For  $\omega=\sum_{I\in \mathcal{I}_{k,n}}\omega_Idx^I\in  W({\bigwedge}^k\Omega)$ and $k\geq 1$, we 
define its adjoint $d^*\omega$ by the condition that 
\begin{align}\label{eq:ad}
	\langle d^*\omega, \eta\rangle_{L^2(\bigwedge^k\Omega)}=\langle \omega, d\eta\rangle_{L^2(\bigwedge^k\Omega)}\qquad \text{for all}\,\,\eta\in W\Big({\bigwedge}^k\Omega\Big).
\end{align}
 If $\omega$ is a $0$-form, we define $d^*\omega =0$.
The {\em Sobolev space of $k$-forms} on $M$ is defined as
	\begin{align*}
		W^{1,2}\Big({\bigwedge}^k\Omega\Big):=\Big\{\omega\in W\Big({\bigwedge}^k \Omega\Big)\bigcap L^2\Big({\bigwedge}^k \Omega\Big):d\omega\in L^2\Big({\bigwedge}^{k+1} \Omega\Big)\\\text{and}\,\,d^*\omega\in L^2\Big({\bigwedge}^{k-1} \Omega\Big)\Big\}
	\end{align*}
	equipped with the inner product $\langle\cdot,\cdot\rangle_{W^{1,2}(\bigwedge^k\Omega)}$ associated to $\|\cdot\|_{W^{1,2}(\bigwedge^k\Omega)}$ defined as
	$$ \left\langle \omega, \eta\right\rangle_{W^{1,2}(\bigwedge^k\Omega)} :=\left\langle \omega, \eta\right\rangle_{L^2(\bigwedge^k\Omega)}+\left\langle d\omega, d\eta\right\rangle_{L^2(\bigwedge^k\Omega)}+\left\langle d^*\omega, d^*\eta\right\rangle_{L^2(\bigwedge^k\Omega)}.$$
	The norm associated to $\langle\cdot,\cdot\rangle_{W^{1,2}(\bigwedge^k\Omega)}$ is defined as
	\begin{align*}
		\|u\|_{W^{1,2}(\bigwedge^k\Omega)}:=\Big(\int_{\Omega}|u|^2\,dV_g+\int_{\Omega}|d u|^2\,dV_g+\int_{\Omega}|d^* u|^2\,dV_g\Big)^{\frac{1}{2}}.
	\end{align*}
	Let $W^{1,2}_0(\bigwedge^k\Omega)$ denote the closure of $\Gamma^\infty_c(\bigwedge^k\Omega)$ in the $W^{1,2}(\bigwedge^k\Omega)$ norm.

	It is known that regular atlases yield classical Sobolev spaces $W^{1,2}_{\mathcal{A}}(\bigwedge^k\Omega)$  equivalent to $W^{1,2}(\bigwedge^k\Omega)$ (see, e.g, \cite{Scott_1995, Morrey_1966}). 
	
	We denote the {\em Euclidean distance} by $d_{E}(\cdot, \cdot)$. For a connected Riemannian $n$-manifold $M$, we denote the {\em Riemannian distance} by $d_M(\cdot,\cdot)$.  For any $\xi\in TM$, define the {\em length of $\xi$} as $|\xi|_E:=\langle \xi,\xi\rangle^{1/2}$.  For $\epsilon>0$, let
	\begin{align*}
		&B(x,\epsilon):=\{y\in \mathbb{R}^n: d_E(x,y)<\epsilon\}, \quad x\in \mathbb{R}^n,\\
		&B^M(p,\epsilon):=\{q\in M: d_M(p,q)<\epsilon\},\quad p\in M,\\
		&B^{T_pM}(0,\epsilon):=\{\xi\in T_pM: |\xi|_E<\epsilon\}.
	\end{align*}
	Let $\mu$ be a positive bounded regular Borel measure on $M$.  To state our main results, recall that the {\em lower} and {\em upper $L^{\infty}$-dimensions of $\mu$}
	are defined respectively as
	\begin{align*}
		\underline{\dim}_\infty(\mu):=\displaystyle{\varliminf_{\delta\to 0^+}}\frac{\ln (\sup_x \mu(B^{M}(x,\delta)))}{\ln \delta}\,\,\,\,\text{and}\,\,\,\,
		\overline{\dim}_\infty(\mu):=\displaystyle{\varlimsup_{\delta\to 0^+}}\frac{\ln (\sup_x \mu(B^{M}(x,\delta)))}{\ln \delta},
	\end{align*}
	where $B^{M}(x,\delta)$ is a $\delta$-ball with center $x\in{\rm supp}(\mu)$ and the supremum is taken over all $x\in{\rm supp}(\mu)$. If the limit exists, we denote the common value by $\dim_\infty(\mu)$. 

		\subsection{Kre\u{\i}n-Feller operators} \label{S:frac}
		Let $M$ be a compact Riemannian $n$-manifold and let $\Omega \subseteq M$ be an open set. Let $\mu$ be a positive bounded regular Borel measure on $M$ with ${\rm supp}(\mu)\subseteq\overline{\Omega}$.   Throughout this paper, we assume that $\mu(\Omega)>0$. Our method of defining a   Laplacian $\Delta_{\mu}^k$ on $L^2\big(\bigwedge^k\Omega,\mu\big)$ associated with $\mu$ is similar to that in \cite{Hu-Lau-Ngai_2006}. First, we need to assume that $\mu$ satisfies the following {\em Poincar$\acute{e}$ inequality:}
	\begin{enumerate}
		\item[(PI)] There exists a constant $C>0$ such that for all $u\in \Gamma^{\infty}_c(\bigwedge^k\Omega)$,
		\begin{align}\label{eq:PI}
			\int_\Omega |u|^2\,d\mu\leq C\int_\Omega \big(|d u|^2+ |d^*u|^2\big)\,dV_g.
		\end{align}
	\end{enumerate}
	This condition implies that each equivalence class $u \in W^{1,2}_0(\bigwedge^k\Omega)$ contains a unique (in the $L^2\big(\bigwedge^k\Omega,\mu\big)$ sense) member $\overline{u}$ that belongs to $L^{2}\big(\bigwedge^k\Omega, \mu\big)$ and satisfies both conditions below:
	\begin{enumerate}
		\item[(1)] There exists a sequence $\left\{u_m\right\}$ in $\Gamma_{c}^{\infty}(\bigwedge^k\Omega)$ such that $u_m \rightarrow \overline{u}$ in $W^{1,2}_0(\bigwedge^k\Omega)$ and $u_m \rightarrow \overline{u}$ in $L^{2}(\bigwedge^k\Omega, \mu)$;
		\item[(2)] $\overline{u}$ satisfies the inequality in \eqref{eq:PI}.
	\end{enumerate}

	We call $\overline{u}$ the {\em $L^2\big(\bigwedge^k\Omega,\mu\big)$-representative} of $u$. Assume  $\mu$ satisfies (PI) and define a mapping $\iota: W^{1,2}_0(\bigwedge^k\Omega) \rightarrow L^2\big(\bigwedge^k\Omega,\mu\big)$ by
	$$
	\iota(u)=\overline{u}.
	$$
	Next, we notice that $\iota$ is a bounded linear operator but is not necessarily injective. Hence we consider the subspace $\mathcal{N}_k$ of $W^{1,2}_0(\bigwedge^k\Omega)$ defined as
	$$
	\mathcal{N}_k:=\left\{u \in W^{1,2}_0\Big({\bigwedge}^k\Omega\Big):\|\iota(u)\|_{L^2(\bigwedge^k\Omega,\mu)}=0\right\}.$$
	Since $\mu$ satisfies (PI), $\mathcal{N}_k$ is a closed subspace of $W^{1,2}_0(\bigwedge^k\Omega)$. Let $\mathcal{N}_k^{\perp}$ be the orthogonal complement of $\mathcal{N}_k$ in $W^{1,2}_0(\bigwedge^k\Omega)$. Then $\iota: \mathcal{N}_k^{\perp} \rightarrow L^2\big(\bigwedge^k\Omega,\mu\big)$ is injective. Throughout this paper, if no confusion is possible, we will denote $\overline{u}$ simply by $u$. For all $u \in \mathcal{N}_k^{\perp}$, we see that $\|u\|_{L^2(\bigwedge^k\Omega,\mu)} \leq C^{1 / 2}\|u\|_{W^{1,2}_0(\bigwedge^k\Omega)}$ by (PI), i.e., $\mathcal{N}_k^{\perp}$ is embedded in $L^2\big(\bigwedge^k\Omega,\mu\big)$.
	
	Now, we consider a nonnegative bilinear form $\mathcal{E}(\cdot, \cdot)$ on $L^2\big(\bigwedge^k\Omega,\mu\big)$ defined as
	\begin{align}\label{eq(1.1)}
		\mathcal{E}(u, v):=\int_\Omega \langle d u,d u\rangle_g+ \langle d^* u,d^* u\rangle_g\,dV_g
	\end{align}
	with \textit{domain} $\operatorname{dom}(\mathcal{E})$ equal to $\mathcal{N}_k^{\perp}$, or more precisely, $\iota(\mathcal{N}_k^{\perp})$. 

 We define another nonnegative bilinear form $\mathcal{E}_{*}(\cdot, \cdot)$ on $L^2\big(\bigwedge^k\Omega,\mu\big)$ as
	\begin{align}\label{eq(2.1)}
		\mathcal{E}_{*}(u, v):=\mathcal{E}(u, v)+\langle u, v\rangle_{\mu}=\int_{\Omega} \big(\langle d u, d v\rangle_g+ \langle d^* u, d^* v\rangle_g \big)\,dV_g+\int_{\Omega} \langle u, v\rangle_g \,d \mu.
	\end{align}
	Then $\mathcal{E}_{*}(\cdot, \cdot)$ is an inner product on $\operatorname{dom}(\mathcal{E})$. 

	To prove Proposition \ref{prop:2.3}, we need the following lemma. 
	
	\begin{lem}\label{lem:2.2} Let $M$ be a compact Riemannian $n$-manifold and let $\Omega\subseteq M$ be an open set. Let $\mu$ be a positive finite Borel measure on $M$ such that ${\rm supp}(\mu) \subseteq \overline{\Omega}$ and $\mu(\Omega)>0$. Then $\Gamma^\infty_c(\bigwedge^k\Omega)$ is dense in $L^2\big(\bigwedge^k\Omega,\mu\big)$.
	\end{lem}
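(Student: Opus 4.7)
My plan is a standard partition-of-unity localization combined with the classical fact that smooth compactly supported functions are dense in $L^2$ of any finite Borel measure on an open subset of Euclidean space. The compactness of $M$ and openness of $\Omega$ will provide the geometric regularity needed for a finite cover by coordinate charts that stay strictly inside $\Omega$.

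Concretely, I would proceed in three reductions. First, because $\mu$ is finite, an exhaustion $\Omega_m \uparrow \Omega$ by relatively compact open subsets of $\Omega$ together with the dominated convergence theorem lets me reduce to forms $\omega$ whose essential $\mu$-support is contained in some compact set $K \subset\subset \Omega$, since $\chi_{\Omega_m}\omega \to \omega$ in $L^2(\bigwedge^k\Omega,\mu)$. Second, I would cover $K$ by finitely many coordinate charts $(U_i, \varphi_i)$ with $\overline{U_i} \subset \Omega$ and pick a smooth partition of unity $\{\rho_i\}$ subordinate to this cover; writing $\omega = \sum_i \rho_i\omega$ reduces the problem to approximating each $\rho_i\omega$, which has compact support in a single chart. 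Third, inside such a chart I would expand $\rho_i\omega = \sum_I (\rho_i\omega)_I \, dx^I$; since the inverse metric components $g^{IJ}$ are continuous and therefore bounded above and below on the compact set $\overline{\operatorname{supp}\rho_i}$, the pointwise Riemannian norm of $\rho_i\omega$ is comparable to the Euclidean norm of the coefficient vector $((\rho_i\omega)_I)_I$, so it suffices to approximate each scalar coefficient $(\rho_i\omega)_I$ in $L^2(U_i,\mu)$ by a function in $C^\infty_c(U_i)$. Reassembling these local approximations yields an element of $\Gamma^\infty_c(\bigwedge^k\Omega)$ close to $\omega$ in $L^2(\bigwedge^k\Omega,\mu)$.

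The main ingredient I expect to invoke is the scalar density statement underlying the third step, namely that $C^\infty_c(V)$ is dense in $L^2(V,\mu)$ for any finite Borel measure $\mu$ on an open set $V \subseteq \R^n$. This is classical: Lusin's theorem yields an approximation by continuous compactly supported functions in $L^2(V,\mu)$, and then convolution with a standard mollifier in Euclidean coordinates (via $\varphi_i$) produces smooth compactly supported approximations converging uniformly, hence in $L^2(V,\mu)$ because $\mu$ is finite. Once this ingredient is in hand, the rest is essentially bookkeeping: checking that the assembled sum $\sum_i \sum_I \tilde f_{i,I}\, dx^I$ genuinely defines a global smooth form on $\Omega$ with compact support (which it does because each summand has compact support inside $U_i \subset \Omega$) and that the triangle inequality, applied with the metric-comparability constants from each chart, combines the finitely many local errors into a global bound smaller than any prescribed $\epsilon$.
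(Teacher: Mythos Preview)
Your proposal is correct and follows essentially the same strategy as the paper: localize via a finite partition of unity to coordinate charts, write a $k$-form in local coordinates as $\sum_I \omega_I\,dx^I$, and reduce to the classical scalar density of $C^\infty_c(U)$ in $L^2(U,\mu)$ for each coefficient.

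The one genuine difference is your preliminary exhaustion step. You first cut down to a form supported in some compact $K\subset\subset\Omega$ and then choose charts $U_i$ with $\overline{U_i}\subset\Omega$, so the approximating smooth forms manifestly lie in $\Gamma^\infty_c(\bigwedge^k\Omega)$. The paper instead takes an atlas and partition of unity on all of $M$ and asserts that the sum of the local approximants lies in $\Gamma^\infty_c(\bigwedge^k\Omega)$; your route makes this point cleaner. You are also more explicit about the metric-comparability constants relating $|\cdot|_g$ to the Euclidean norm of the coefficient vector on the compact support of $\rho_i$, whereas the paper works as if $\{dx^I\}$ were orthonormal. Neither change is essential, but both make the argument tidier.
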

		\begin{proof}\,Let $(U,\varphi)$ be a coordinate chart on $M$ and $\omega\in L^2\big(\bigwedge^kU,\mu\big)$.  By \eqref{eq:1.1}, we write
			\begin{align*}
			\omega=\sum_{I\in \mathcal{I}_{k,n}} \omega_I\,dx^I,
		\end{align*}
		where the coefficients $\omega_I$ are measurable functions with respect to $\mu$ on $U$.
		Hence $$\int_{U}\sum_{I\in \mathcal{I}_{k,n}} |\omega_I|^2\,d\mu<\infty.$$ Thus for each $I$, $\omega_I\in L^2(U,\mu)$. Let $\epsilon>0$ be arbitrary. For each $I$, let $\epsilon_I>0$ such that 
			\begin{align}\label{eq:4.2}
				\sum_{I\in \mathcal{I}_{k,n}}\epsilon_I<\epsilon.
					\end{align}
			 Since
			$C^\infty_c(U)$ is dense in $L^2(U,\mu)$, for each $I$, there exists some $f_I\in C^\infty_c(U)$ such that 
			\begin{align}\label{eq:2.2}
				\|f_I-\omega_I\|_{L^2(U,\mu)} <\epsilon_I.
			\end{align}
			Let $f:=\sum_{I\in \mathcal{I}_{k,n}}f_Idx^I$. Then $f$ is smooth (see, e.g., \cite[Lemma 18.6]{Tu_2011}), and by definition, $f$ has compact support. Thus $f\in \Gamma^\infty_c(\bigwedge^kU)$. Moreover, 
			\begin{align*}
				\|f-\omega\|_{L^2(\bigwedge^kU,\mu)}^2&=\int_U\Big|\sum_{I\in \mathcal{I}_{k,n}}(f_I-\omega_I)dx^I\Big|^2\,d\mu\nonumber\\
				&<\sum_{I\in \mathcal{I}_{k,n}}\epsilon_I \qquad(\text{by} \,\,\eqref{eq:2.2})\nonumber\\
				&<\epsilon.\,\,\qquad\qquad(\text{by} \,\,\eqref{eq:4.2})
			\end{align*}
		Thus $\Gamma_{c}^{\infty}(\bigwedge^kU)$ is dense in $L^2(\bigwedge^kU,\mu)$.
		
		By the compactness of $M$, we can choose a $C^\infty$ atlas  $\{(U_i,\varphi_{U_i})\}_{i=1}^{N}$ on $M$ and a $C^\infty$ partition of unity $\{\rho_{i}\}_{i=1}^{N}$ satisfying:
		$$M\subseteq\bigcup_{i=1}^NU_i,\quad{\rm supp}(\rho_{i})\subseteq U_{i},\quad\text{and}\,\sum^{N}_{i=1}\rho_{i}(x)=1,\quad x\in M.$$
		Let $\omega\in L^2\big(\bigwedge^k\Omega,\mu\big)$. We can write $\omega:=\sum^{N}_{i=1}\rho_{i}\omega$, with ${\rm supp}(\rho_{i}\omega)\subseteq U_{i}$. Let $\epsilon>0$ be arbitrary. For each $i=1,\ldots, N$, there exists some $g_{\alpha_i}\in \Gamma_{c}^{\infty}(\bigwedge^kU_{i})$ such that
		\begin{align}\label{eq:pa}
		 \|\rho_{i}\omega-g_{\alpha_i}\|_{L^2(\bigwedge^kU_i,\mu)}<\frac{\epsilon}{2^i}.
		 \end{align}
		Let $g:=\sum^{N}_{i=1}g_{\alpha_i}$. Then $g\in \Gamma_{c}^{\infty}(\bigwedge^k\Omega)$. 
		Hence
		\begin{align*}
			\|\omega-g\|_{L^2(\bigwedge^k\Omega,\mu)}&=\bigg\|\sum^{N}_{i=1}\rho_{i}\omega-\sum^{N}_{i=1}g_{\alpha_i}\bigg\|_{L^2(\bigwedge^k\Omega,\mu)}\\
			&\leq \sum^{N}_{i=1}\|\rho_{i}\omega-g_{\alpha_i}\|_{L^2(\bigwedge^k\Omega,\mu)}\\
			&\leq \sum^{N}_{i=1}\frac{\epsilon} {2^i}<\epsilon.\qquad(\text{by}\,\,\eqref{eq:pa})
		\end{align*}
		Hence $\Gamma^\infty_c(\bigwedge^k\Omega)$ is dense in $L^2\big(\bigwedge^k\Omega,\mu\big)$.
\end{proof}

	\begin{prop}\label{prop:2.3}
	Let $M$ be a compact Riemannian $n$-manifold and let $\Omega\subseteq M$ be an open set. Let $\mu$ be a positive finite Borel measure on $M$ such that ${\rm supp}(\mu) \subseteq \overline{\Omega}$ and $\mu(\Omega)>0$. Let $\mathcal{E}$ and $\mathcal{E}_{*}$ be the quadratic forms defined as in (\ref{eq(1.1)}) and (\ref{eq(2.1)}), respectively. Assume  $\mu$ satisfies (PI). Then
		\begin{enumerate}
			\item[(a)] $\operatorname{dom}(\mathcal{E})$ is dense in $L^2\big(\bigwedge^k\Omega,\mu\big)$.
			\item[(b)] $\left(\mathcal{E}_{*}, \operatorname{dom}(\mathcal{E})\right)$ is a Hilbert space.
		\end{enumerate}
	\end{prop}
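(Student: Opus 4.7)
For part (a), the strategy is to establish the inclusion $\Gamma_c^\infty(\bigwedge^k\Omega)\subseteq\operatorname{dom}(\mathcal{E})$ (as subsets of $L^2(\bigwedge^k\Omega,\mu)$), and then appeal to Lemma~\ref{lem:2.2}. For any $v\in\Gamma_c^\infty(\bigwedge^k\Omega)\subseteq W^{1,2}_0(\bigwedge^k\Omega)$, the constant sequence $u_m\equiv v$ verifies conditions (1) and (2) of the representative construction, so $\iota(v)=v$ in $L^2(\bigwedge^k\Omega,\mu)$. Decomposing $v=v_1+v_2$ with $v_1\in\mathcal{N}_k$ and $v_2\in\mathcal{N}_k^{\perp}$, we then obtain $v=\iota(v)=\iota(v_2)\in\iota(\mathcal{N}_k^{\perp})=\operatorname{dom}(\mathcal{E})$ as elements of $L^2(\bigwedge^k\Omega,\mu)$. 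Density of $\operatorname{dom}(\mathcal{E})$ in $L^2(\bigwedge^k\Omega,\mu)$ then follows at once from Lemma~\ref{lem:2.2}.

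For part (b), I would let $\{u_m\}\subseteq\operatorname{dom}(\mathcal{E})$ be an $\mathcal{E}_*$-Cauchy sequence and take the unique $\tilde u_m\in\mathcal{N}_k^{\perp}$ with $\iota(\tilde u_m)=u_m$. The $\mathcal{E}_*$-Cauchy condition directly yields that $\{d\tilde u_m\}$ and $\{d^*\tilde u_m\}$ are Cauchy in $L^2$ with respect to $dV_g$, while $\{u_m\}$ is Cauchy in $L^2(\bigwedge^k\Omega,\mu)$. The plan is to upgrade this to $W^{1,2}_0$-Cauchyness of $\{\tilde u_m\}$; once done, closedness of $\mathcal{N}_k^{\perp}$ in $W^{1,2}_0(\bigwedge^k\Omega)$ produces a limit $\tilde u\in\mathcal{N}_k^{\perp}$, and continuity of $d$, $d^*$, and $\iota$ forces $u_m\to\iota(\tilde u)\in\operatorname{dom}(\mathcal{E})$ in $\mathcal{E}_*$, giving completeness.

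The crucial step, and the main obstacle, is the Poincar\'e-type estimate
\[
\|\tilde u\|_{L^2(\bigwedge^k\Omega)}^2\leq K\bigl(\|d\tilde u\|_{L^2(\bigwedge^{k+1}\Omega)}^2+\|d^*\tilde u\|_{L^2(\bigwedge^{k-1}\Omega)}^2\bigr),\qquad\tilde u\in\mathcal{N}_k^{\perp},
\]
to which $W^{1,2}_0$-Cauchyness of $\{\tilde u_m\}$ reduces. I would prove it by contradiction: if it fails, one obtains a sequence $\{\tilde u_m\}\subseteq\mathcal{N}_k^{\perp}$ with $\|\tilde u_m\|_{L^2}=1$ and $\|d\tilde u_m\|^2+\|d^*\tilde u_m\|^2\to 0$, which is bounded in $W^{1,2}_0$. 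Rellich's compact embedding $W^{1,2}_0(\bigwedge^k\Omega)\hookrightarrow L^2(\bigwedge^k\Omega)$ (valid on compact Riemannian manifolds via a regular atlas, a partition of unity, and the classical Rellich--Kondrachov theorem on Euclidean balls) then provides an $L^2$-convergent subsequence $\tilde u_{m_j}\to\tilde u$ with $\|\tilde u\|_{L^2}=1$. Passing to a further subsequence converging weakly in $W^{1,2}_0$, we obtain $d\tilde u=0$, $d^*\tilde u=0$, and $\tilde u\in\mathcal{N}_k^{\perp}$ (weakly closed, as a norm-closed convex subspace). Applying (PI) to $\tilde u\in W^{1,2}_0(\bigwedge^k\Omega)$ forces $\iota(\tilde u)=0$, so $\tilde u\in\mathcal{N}_k\cap\mathcal{N}_k^{\perp}=\{0\}$, contradicting $\|\tilde u\|_{L^2}=1$. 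The estimate then combines with Cauchyness of $\{d\tilde u_m\}$ and $\{d^*\tilde u_m\}$ to give $L^2$-Cauchyness of $\{\tilde u_m\}$, completing the proof.
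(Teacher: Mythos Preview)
Your proof of part (a) is essentially the paper's: both approximate an element of $L^2(\bigwedge^k\Omega,\mu)$ by smooth compactly supported forms via Lemma~\ref{lem:2.2} and observe that the $\mathcal{N}_k^\perp$-component of any such form represents the same $L^2(\mu)$-class.

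For part (b) your route genuinely differs from the paper's. The paper argues that, under (PI), the $\mathcal{E}_*$-norm and the $W^{1,2}_0$-norm are equivalent on $\operatorname{dom}(\mathcal{E})=\mathcal{N}_k^\perp$: one direction is immediate from (PI), and for the other the paper invokes a classical Poincar\'e-type inequality on $W^{1,2}_0(\bigwedge^k\Omega)$ bounding $\|u\|_{L^2(\bigwedge^k\Omega)}$ by $\|du\|+\|d^*u\|$. Completeness of $(\operatorname{dom}(\mathcal{E}),\mathcal{E}_*)$ then follows from completeness of $\mathcal{N}_k^\perp$ in $W^{1,2}_0$. You instead \emph{derive} precisely that inequality, restricted to $\mathcal{N}_k^\perp$, via a Rellich compactness and contradiction argument, using (PI) a second time to force the limit into $\mathcal{N}_k\cap\mathcal{N}_k^\perp=\{0\}$. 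Your approach is more self-contained and makes transparent that the inequality is only needed on $\mathcal{N}_k^\perp$; this is relevant, for instance, when $\Omega=M$ has no boundary and carries nontrivial harmonic $k$-forms, in which case a Poincar\'e inequality valid on all of $W^{1,2}_0$ fails. The trade-off is that you must invoke the compact embedding $W^{1,2}_0(\bigwedge^k\Omega)\hookrightarrow L^2(\bigwedge^k\Omega)$ for forms, which the paper does not need at this stage of the argument.
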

	\begin{proof}\,(a) By Lemma \ref{lem:2.2}, $\Gamma^\infty_c(\bigwedge^k\Omega)$ is dense in  $L^{2}(\bigwedge^k\Omega, \mu)$. Now let $u \in L^{2}(\bigwedge^k\Omega, \mu)$ and let $\left\{u_m\right\}$ be a sequence in $\Gamma^\infty_c(\bigwedge^k\Omega)$ converging to $u$ in the $L^{2}(\bigwedge^k\Omega, \mu)$-norm. Write $u_m=u_m^{0}+u_m^{\perp}$, where $u_m^{0} \in \mathcal{N}_k$ and $u_m^{\perp} \in \mathcal{N}_k^{\perp}$. Then  $\iota(u_m^{\perp})\in\operatorname{dom}(\mathcal{E})$ and $	\lim_{m \rightarrow \infty}\|\iota(u_m^{\perp})-u\|_{L^2(\bigwedge^k\Omega,\mu)}=0$.
	Thus $\iota(u_m^{\perp} )$ converges to $u$ in $L^{2}(\bigwedge^k\Omega, \mu)$.
	Hence, $\operatorname{dom}(\mathcal{E})$ is dense in $L^{2}(\bigwedge^k\Omega, \mu)$.
	
	(b) Under (PI), we can show that the norm induced by $\mathcal{E}_{*}$ is equivalent to the norm $\|\cdot\|_{W^{1,2}_0(\bigwedge^k\Omega)}$. 	Hence $\left(\mathcal{E}_{*}, \operatorname{dom}(\mathcal{E})\right)$ is complete.\end{proof}

	Proposition \ref{prop:2.3} implies that if $\mu$ satisfies (PI), then the quadratic form $(\mathcal{E}, \operatorname{dom}(\mathcal{E}))$ is closed on $L^2\big(\bigwedge^k\Omega,\mu\big)$.  Hence it follows from standard theory that there exists a nonnegative self-adjoint operator $-\Delta^k_\mu$ on $L^2\big(\bigwedge^k\Omega,\mu\big)$ with $\operatorname{dom}(\Delta^k_\mu) \subseteq \operatorname{dom}\Big({(\Delta^k_\mu)}^{1 / 2}\Big)=\operatorname{dom}(\mathcal{E})$, such that
	$$
	\mathcal{E}(u, v)=\left\langle-{(\Delta^k_\mu)}^{1 / 2} u, -{(\Delta^k_\mu)}^{1 / 2} v\right\rangle_{\mu} \quad \text { for all } u, v \in \operatorname{dom}(\mathcal{E}).
	$$
	Moreover, $u \in \operatorname{dom}(\Delta^k_\mu)$ if and only if $u \in \operatorname{dom}(\mathcal{E})$ and there exists $f \in L^2\big(\bigwedge^k\Omega,\mu\big)$ such that 
	\begin{align}\label{no1}
		\mathcal{E}(u, v)=\langle f, v\rangle_{\mu} \quad \text{for all}\,\, v \in \operatorname{dom}(\mathcal{E})
		\end{align}
		(see, e.g., \cite{Kigami_2001}). Note that for all $u \in \operatorname{dom}(\Delta^k_\mu)$ and $v \in \operatorname{dom}(\mathcal{E})$,
	\begin{align}\label{eq(2.2)}
		\int_{\Omega} \big(\langle du, dv\rangle_g+\langle d^*u, d^*v\rangle_g \big)\, dV_g=\mathcal{E}(u, v)=\langle -\Delta^k_\mu u, v\rangle_{\mu}.
	\end{align}
If $\mu=V_g$, then for all $u \in \operatorname{dom}(\Delta^k)$ and $v \in \operatorname{dom}(\mathcal{E})$,
		\begin{align*}
			\int_{\Omega} \big(\langle du, dv\rangle_g+\langle d^*u, d^*v\rangle_g \big)\, dV_g=\mathcal{E}(u, v)=\langle -\Delta^k u, v\rangle_{L^2(\bigwedge^k\Omega)}.
		\end{align*}

	\begin{prop}\label{prop:2.4}
		Assume that conditions (PI) holds. For $u \in \operatorname{dom}(\mathcal{E})$ and $f \in L^{2}(\bigwedge^k\Omega, \mu)$, the following conditions are equivalent:
		\begin{enumerate}
			\item[(a)] $u \in \operatorname{dom}(\Delta^k_\mu)$ and $-\Delta^k_\mu u=f$;
			\item[(b)] $-\Delta^k u=f d \mu$ in the sense that  for any $v \in \Gamma^\infty_c(\bigwedge^k\Omega)$,
			\begin{align}\label{eq(2.3)}
			\int_{\Omega}\big( \langle du, dv\rangle_g+\langle d^*u, d^*v\rangle_g\big) \, dV_g=\int_{\Omega} \langle v, f\rangle_g\, d \mu.
			\end{align}	
		\end{enumerate}
	\end{prop}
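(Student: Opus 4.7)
The plan is to establish the two implications by pivoting through the variational identity \eqref{no1}: a form $u\in\operatorname{dom}(\mathcal{E})$ lies in $\operatorname{dom}(\Delta_\mu^k)$ with $-\Delta_\mu^k u=f$ if and only if $\mathcal{E}(u,v)=\langle f,v\rangle_\mu$ for all $v\in\operatorname{dom}(\mathcal{E})$. So both directions amount to bridging between test forms $v\in\Gamma_c^\infty(\bigwedge^k\Omega)$ and arbitrary $v\in\operatorname{dom}(\mathcal{E})=\iota(\mathcal{N}_k^\perp)$, using the $W^{1,2}_0$-orthogonal decomposition together with (PI).

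For (a)$\Rightarrow$(b), assume $u\in\operatorname{dom}(\Delta_\mu^k)$ with $-\Delta_\mu^k u=f$. By \eqref{eq(2.2)}, $\mathcal{E}(u,w)=\langle f,w\rangle_\mu$ for every $w\in\operatorname{dom}(\mathcal{E})$. Given $v\in\Gamma_c^\infty(\bigwedge^k\Omega)\subseteq W^{1,2}_0(\bigwedge^k\Omega)$, I would write $v=v_1+v_2$ with $v_1\in\mathcal{N}_k$ and $v_2\in\mathcal{N}_k^\perp$. Since $\iota(v_1)=0$ in $L^2(\bigwedge^k\Omega,\mu)$, we have $\iota(v)=\iota(v_2)$, so the right-hand side of \eqref{eq(2.3)} equals $\langle f,\iota(v_2)\rangle_\mu$, and applying the identity to $v_2\in\operatorname{dom}(\mathcal{E})$ produces the matching integral over $v_2$. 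The residual task is to show that the $v_1$-piece contributes nothing, i.e.\ $\int_\Omega(\langle du,dv_1\rangle_g+\langle d^*u,d^*v_1\rangle_g)\,dV_g=0$; this can be obtained by approximating $v_1$ by test forms $\{w_m\}\subseteq\Gamma_c^\infty(\bigwedge^k\Omega)$ with $w_m\to v_1$ in $W^{1,2}$ (so $\iota(w_m)\to 0$ in $L^2(\mu)$ by (PI)), and combining \eqref{no1} with the orthogonality $u\perp v_1$ in $W^{1,2}_0(\bigwedge^k\Omega)$.

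For (b)$\Rightarrow$(a), assume that \eqref{eq(2.3)} holds for every $v\in\Gamma_c^\infty(\bigwedge^k\Omega)$. I would extend this by density to all of $\operatorname{dom}(\mathcal{E})$. Given $v\in\operatorname{dom}(\mathcal{E})$, pick its $\mathcal{N}_k^\perp$-representative $\widetilde v\in W^{1,2}_0(\bigwedge^k\Omega)$ and a sequence $\{v_m\}\subseteq\Gamma_c^\infty(\bigwedge^k\Omega)$ with $v_m\to\widetilde v$ in the $W^{1,2}$-norm. Then $dv_m\to d\widetilde v$ and $d^*v_m\to d^*\widetilde v$ in $L^2$, so the left-hand side of \eqref{eq(2.3)} for $v_m$ converges to $\mathcal{E}(u,v)$; by (PI), $\iota(v_m)\to v$ in $L^2(\bigwedge^k\Omega,\mu)$, so the right-hand side converges to $\langle f,v\rangle_\mu$. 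Hence $\mathcal{E}(u,v)=\langle f,v\rangle_\mu$ for every $v\in\operatorname{dom}(\mathcal{E})$, which by \eqref{no1} means $u\in\operatorname{dom}(\Delta_\mu^k)$ and $-\Delta_\mu^k u=f$.

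The main obstacle is the bookkeeping in (a)$\Rightarrow$(b): one must confirm that the $\mathcal{N}_k$-component of a test form makes no contribution to the Dirichlet-type integral $\int_\Omega(\langle du,d\,\cdot\,\rangle_g+\langle d^*u,d^*\,\cdot\,\rangle_g)\,dV_g$. This is the step where the interaction between the $W^{1,2}_0(\bigwedge^k\Omega)$-orthogonal decomposition used to define $\mathcal{N}_k^\perp$ and the $\mathcal{E}_*$-structure from Proposition \ref{prop:2.3} must be exploited via (PI); the opposite direction (b)$\Rightarrow$(a) is comparatively routine once Lemma \ref{lem:2.2} and the equivalence of the $\mathcal{E}_*$-norm with the $W^{1,2}_0$-norm are in hand.
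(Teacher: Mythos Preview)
Your (b)$\Rightarrow$(a) direction is correct and matches the paper's proof exactly: approximate $v\in\operatorname{dom}(\mathcal{E})$ by test forms in the $W^{1,2}_0$-norm and pass to the limit on both sides of \eqref{eq(2.3)}, using (PI) to control the $L^2(\mu)$ side.

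For (a)$\Rightarrow$(b), the paper is much more direct than you: it simply invokes \eqref{eq(2.2)} with $v\in\Gamma_c^\infty(\bigwedge^k\Omega)$, reading $\mathcal{E}(u,v)$ as the integral $\int_\Omega(\langle du,dv\rangle_g+\langle d^*u,d^*v\rangle_g)\,dV_g$ and $\langle-\Delta_\mu^k u,v\rangle_\mu$ as $\int_\Omega\langle v,f\rangle_g\,d\mu$, without decomposing $v$ at all. Your decomposition $v=v_1+v_2$ with $v_1\in\mathcal{N}_k$, $v_2\in\mathcal{N}_k^\perp$ is more scrupulous, but the sketch you give for showing that the $v_1$-piece contributes nothing has a genuine gap. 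The $W^{1,2}_0$-orthogonality of $u\in\mathcal{N}_k^\perp$ against $v_1\in\mathcal{N}_k$ only yields
\[
\int_\Omega\langle u,v_1\rangle_g\,dV_g+\int_\Omega\big(\langle du,dv_1\rangle_g+\langle d^*u,d^*v_1\rangle_g\big)\,dV_g=0,
\]
which does not by itself force the second integral to vanish. And approximating $v_1$ by test forms $w_m$ and then invoking \eqref{no1} is circular: \eqref{no1} is available only for elements of $\operatorname{dom}(\mathcal{E})=\mathcal{N}_k^\perp$, so you would have to decompose each $w_m=w_m^0+w_m^\perp$ and face the same unresolved $\mathcal{N}_k$-remainder $w_m^0$ once more. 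The paper avoids this bookkeeping entirely by treating $\Gamma_c^\infty(\bigwedge^k\Omega)$ as sitting inside $\operatorname{dom}(\mathcal{E})$ via $\iota$ and applying \eqref{eq(2.2)} in one line.
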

		\begin{proof}\, Assume that (a) holds. We see from (\ref{eq(2.2)}) that
			$$
			\int_{\Omega}\langle v, f\rangle_g d \mu=\langle -\Delta^k_\mu u, v\rangle_{\mu}=\mathcal{E}(u, v)=\int_{\Omega} \big( \langle du, dv\rangle_g+\langle d^*u, d^*v\rangle_g\big) \,dV_g
			$$
			for any $v \in  \Gamma^\infty_c(\bigwedge^k\Omega)$. Hence (b) holds.
			
			Conversely, assume that (b) holds. 
			Since $ \Gamma^\infty_c(\bigwedge^k\Omega)$ is dense in $\operatorname{dom}(\mathcal{E})$, one can show, by using condition (PI), that (\ref{eq(2.3)}) also holds for all $v \in \operatorname{dom}(\mathcal{E})$. In fact, for each $v\in  \Gamma^\infty_c(\bigwedge^k\Omega)$, there exists $v_m\in \operatorname{dom}(\mathcal{E})$ such that 
		\begin{align}\label{eq:dom}
			\lim_{m \rightarrow \infty}\big(\langle (v_m-v), (v_m-v)\rangle_{\mu}+ \langle d(v_m-v), d(v_m-v)\rangle_{L^2(\bigwedge^k\Omega)}&\nonumber\\+ \langle d^* (v_m-v), d^* (v_m-v)\rangle_{L^2(\bigwedge^k\Omega)}\big) &= 0.
		\end{align}
Using  \eqref{eq:dom}, we have
		\begin{align*}
		& 	\lim_{m \rightarrow \infty}\big|\langle du, dv_m\rangle_{L^2(\bigwedge^k\Omega)}+ \langle d^*u, d^*v_m\rangle_{L^2(\bigwedge^k\Omega)}- \langle du, dv\rangle_{L^2(\bigwedge^k\Omega)}- \langle d^*u, d^*v\rangle_{L^2(\bigwedge^k\Omega)}\big|\\
		\leq&	\lim_{m \rightarrow \infty}\Big|\langle du, du\rangle^{1/2}_{L^2(\bigwedge^k\Omega)}\langle d(v_m-v), d(v_m-v)\rangle^{1/2}_{L^2(\bigwedge^k\Omega)} \\&+\langle d^*u, d^*u\rangle^{1/2}_{L^2(\bigwedge^k\Omega)}\langle d^*(v_m-v), d^*(v_m-v)\rangle^{1/2}_{L^2(\bigwedge^k\Omega)}\Big|\\
		=&\,\, 0 
	\end{align*}
	and
\begin{align*}
		\lim_{m \rightarrow \infty}\big| \langle v, f\rangle_{\mu}- \langle v_m, f\rangle_{\mu}\big|
	\leq\lim_{m \rightarrow \infty}\big|\langle v-v_m, v-v_m\rangle_{\mu}^{1/2}\langle f, f\rangle_{\mu}^{1/2} \big|
	= 0. 
\end{align*}
		Hence, we see that $\mathcal{E}(u, v)=$ $\langle f, v\rangle_{\mu}$ for all $v \in \operatorname{dom}(\mathcal{E})$. This implies that $u \in \operatorname{dom}(\Delta^k_\mu)$ and $-\Delta^k_\mu u=f$. Therefore, (a) follows.
	\end{proof}
	
We call $\Delta_{\mu}^k$ the {\em Laplacian or Kre\u{\i}n-Feller operator on $k$-forms with respect to} $\mu$. 
 For any $u \in \operatorname{dom}\left(\Delta_{\mu}^k\right)$, we have $\Delta^k u=\Delta_{\mu}^k u\, d \mu$ in the sense of distribution by Proposition \ref{prop:2.4}.

	\begin{thm}\label{thm:2.5}
		Assume that condition (PI) holds. Then for any $f \in L^{2}(\bigwedge^k\Omega, \mu)$, where $0\leq k\leq n$, there exists a
		unique $u \in \operatorname{dom}\left(\Delta_{\mu}^k\right)$ such that $\Delta_{\mu}^k u=f.$ The operator
		$$(\Delta_{\mu}^k)^{-1}: L^{2}\Big({\bigwedge}^k\Omega, \mu\Big) \rightarrow \operatorname{dom}(\Delta_{\mu}^k),\quad f\mapsto u $$
		is bounded and has norm at most $C$, the constant in (PI).
	\end{thm}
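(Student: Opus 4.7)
The plan is to deduce Theorem \ref{thm:2.5} from the Riesz representation theorem applied to the bilinear form $\mathcal{E}$ on $\operatorname{dom}(\mathcal{E})$. The key observation is that under (PI), the quadratic form $\mathcal{E}$ itself (not just $\mathcal{E}_*$) endows $\operatorname{dom}(\mathcal{E})$ with a Hilbert space structure.

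First, I would verify that $\mathcal{E}$ defines an inner product on $\operatorname{dom}(\mathcal{E})$ equivalent to $\mathcal{E}_*$. The inequality $\mathcal{E}(u,u)\leq\mathcal{E}_*(u,u)$ is trivial, while (PI) applied through the passage to $L^2\big(\bigwedge^k\Omega,\mu\big)$-representatives yields
\begin{align*}
\mathcal{E}_*(u,u)=\mathcal{E}(u,u)+\|u\|_{L^2(\bigwedge^k\Omega,\mu)}^2\leq(1+C)\,\mathcal{E}(u,u)
\end{align*}
for every $u\in\operatorname{dom}(\mathcal{E})$. Combined with Proposition \ref{prop:2.3}(b), this gives that $(\operatorname{dom}(\mathcal{E}),\mathcal{E})$ is a Hilbert space.

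Next, given $f\in L^2\big(\bigwedge^k\Omega,\mu\big)$, I would consider the linear functional $F:\operatorname{dom}(\mathcal{E})\to\R$ defined by $F(v):=-\langle f,v\rangle_\mu$. By Cauchy--Schwarz and (PI),
\begin{align*}
|F(v)|\leq\|f\|_{L^2(\bigwedge^k\Omega,\mu)}\,\|v\|_{L^2(\bigwedge^k\Omega,\mu)}\leq C^{1/2}\|f\|_{L^2(\bigwedge^k\Omega,\mu)}\,\mathcal{E}(v,v)^{1/2},
\end{align*}
so $F$ is a bounded linear functional on $(\operatorname{dom}(\mathcal{E}),\mathcal{E})$. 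The Riesz representation theorem then produces a unique $u\in\operatorname{dom}(\mathcal{E})$ satisfying $\mathcal{E}(u,v)=-\langle f,v\rangle_\mu$ for all $v\in\operatorname{dom}(\mathcal{E})$. By the characterization \eqref{no1} of $\operatorname{dom}(\Delta_\mu^k)$, this means $u\in\operatorname{dom}(\Delta_\mu^k)$ with $-\Delta_\mu^k u=-f$, i.e., $\Delta_\mu^k u=f$. Uniqueness of $u$ in $\operatorname{dom}(\Delta_\mu^k)$ follows from uniqueness in Riesz.

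Finally, to bound the inverse, I would test $v=u$ in the identity just obtained and combine with (PI):
\begin{align*}
C^{-1}\|u\|_{L^2(\bigwedge^k\Omega,\mu)}^2\leq\mathcal{E}(u,u)=-\langle f,u\rangle_\mu\leq\|f\|_{L^2(\bigwedge^k\Omega,\mu)}\,\|u\|_{L^2(\bigwedge^k\Omega,\mu)},
\end{align*}
which rearranges to $\|u\|_{L^2(\bigwedge^k\Omega,\mu)}\leq C\,\|f\|_{L^2(\bigwedge^k\Omega,\mu)}$, proving $\|(\Delta_\mu^k)^{-1}\|\leq C$. There is no substantive obstacle here: once (PI) is used to upgrade $\mathcal{E}$ to a genuine coercive inner product on $\operatorname{dom}(\mathcal{E})$, the result is a direct application of Riesz. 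The only point requiring mild care is the sign convention, which is handled by representing $-f$ rather than $f$.
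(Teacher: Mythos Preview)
Your proposal is correct and follows essentially the same route as the paper's (commented-out) proof: define the linear functional $v\mapsto -\langle f,v\rangle_\mu$, use (PI) to see it is bounded on $(\operatorname{dom}(\mathcal{E}),\mathcal{E})$, apply the Riesz representation theorem, and read off the norm bound. Your explicit verification that $\mathcal{E}$ itself is a Hilbert inner product equivalent to $\mathcal{E}_*$, and your derivation of the bound $\|u\|_{L^2(\bigwedge^k\Omega,\mu)}\leq C\|f\|_{L^2(\bigwedge^k\Omega,\mu)}$ by testing $v=u$, are minor presentational differences but not a different method.
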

 The proof of Theorem \ref{thm:2.5} is similar to that of \cite[Theorem 2.3]{Hu-Lau-Ngai_2006}; we omit the details.

	Theorem \ref{thm:2.5} shows that for any $f \in L^{2}(\bigwedge^k\Omega, \mu)$, the equation
	$$
	\Delta_{\mu}^k u=f,\quad u|_{\partial \Omega}=0
	$$
	has a unique solution in $L^{2}(\bigwedge^k\Omega, \mu)$.

	\section{$L^\infty$-dimension and proofs of Theorems \ref{thm:1.1} and \ref{thm:1.2}} \label{S:L}
	\setcounter{equation}{0}
		Let $M$ be a compact Riemannian $n$-manifold, and $\mu$ be a positive finite regular Borel measure on $M$ with compact support.  
	 For each $p\in M$, the {\em injectivity radius of $M$ at $p$}, denoted as
	$${\rm inj}(p):=\inf\big\{r>0: \exp_p:B^{T_pM}(0, r)\rightarrow B^M(p, r)\,\, \text{is a diffeomorphism}\big\}.$$
	 Let
\begin{align*}
	{\rm inj}(M):=\inf\big\{ {\rm inj}(p),\,\, p\in M\big\}.
\end{align*}
Let $\epsilon\in(0,{\rm inj}(p))$. Then the exponential map $\exp_p:B^{T_pM}(0,\epsilon)\rightarrow B^M(p,\epsilon)$ is a diffeomorphism.  Every orthonormal basis $(b_i)$ for $T_pM$ determines a basis isomorphism  
\begin{align}\label{eq:4.1a}
E_p:T_pM  \rightarrow \mathbb{R}^n.
\end{align}
  Moreover, $E_p$ is an isometry. Let $B(0,\epsilon):=E_p(B^{T_pM}(0,\epsilon))$ and $S: B(0,\epsilon) \rightarrow B(z,\epsilon)$ be a similitude. Define a normal coordinate map
\begin{align*}
	\varphi:=S \circ E_p \circ \exp _{p}^{-1}: B^M(p,\epsilon) \rightarrow B(z,\epsilon)\subseteq\mathbb{R}^{n}.
\end{align*}
Since $M$ is compact, there exists  a finite open cover $\{B^M(p_i,\epsilon_i)\}_{i=1}^N$ of $M$, where $p_i\in M$, $\epsilon_i:=\epsilon_{p_i}\in(0,{\rm inj}(M))$, and $B^M(p_i,\epsilon_i)$ is a geodesic ball. Hence the mapping $\exp_{p_i}:B^{T_{p_i}M}(0,\epsilon_i)\rightarrow B^M(p_i,\epsilon_i)$ is a diffeomorphism.  For each $i=1,\ldots,N$, let $E_{p_i}: T_{p_i}M\rightarrow \mathbb{R}^n$ be defined in \eqref{eq:4.1a}, and let $B(0,\epsilon_i):=E_{p_i}(B^{T_{p_i}M}(0,\epsilon_i))$. Next we let $S_i: B(0,\epsilon_i) \rightarrow B(z_i,\epsilon_i)$ be similitudes so that  $B(z_i,\epsilon_i)$ are disjoint. Therefore we have a family of normal coordinate maps
\begin{align*}
	\varphi_i:=S_i \circ E_{p_i }\circ \exp _{p_i}^{-1}: B^M(p_i,\epsilon_i) \rightarrow B(z_i,\epsilon_i), \,\,i=1,\ldots,N,
\end{align*}
where  $\varphi_i(p_i)=z_i$ and the sets $\varphi_i(B^M(p_i,\epsilon_i))$ are disjoint.

	Let $\mathcal{B}:=\big\{u \in \Gamma^\infty_c(\bigwedge^kM):\|u\|_{W^{1,2}_0(\bigwedge^kM)} \leq 1\big\}$. Let $U\subseteq M$ be an open set. Let $\mathcal{B}_U:=\{u \in \Gamma^\infty_c(\bigwedge^kU):\|u\|_{W^{1,2}_0(\bigwedge^kU)} \leq 1\}$ and $\widetilde{\mathcal{B}}_U:=\{u \in C^\infty_c(U):\|u\|_{W^{1,2}_0(U)} \leq 1\}$.   The following theorem generalizes the compact embedding theorem of Maz'ja (see \cite[Section 8.8]{Maz'ja_1985}).
	\begin{thm}\label{thm:3.11}
	Let $M$ be a compact Riemannian $n$-manifold, $\Omega\subseteq M$ be an open set, and $\mu$ be a positive finite Borel measure on $M$ with $\rm{supp}(\mu)\subseteq \overline{\Omega}$ and $\mu(\Omega)>0$.  For $q>2$, the unit ball $\mathcal{B}$
		is relatively compact in $L^{q}(\bigwedge^kM, \mu)$ if and only if
	\begin{align}
		&\lim _{\delta \rightarrow 0^{+}} \sup _{w\in M ; r \in(0, \delta)} r^{1-n / 2} \mu\left(B^M(w,r)\right)^{1 / q}=0 \quad \text {\rm for } n>2,  \label{eq:3.18} 
		\end{align}
	and
		\begin{align}
		&\lim _{\delta \rightarrow 0^{+}} \sup _{w \in M ; r \in(0, \delta)}|\ln r|^{1 / 2} \mu\left(B^M(w,r)\right)^{1 / q}=0 \quad \text {\rm for } n=2\label{eq:3.19}.
	\end{align}
	\end{thm}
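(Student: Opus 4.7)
The plan is to reduce the form-valued Riemannian statement to Maz'ja's classical scalar Euclidean compact embedding theorem, using the finite family of normal coordinate charts $\varphi_i:B^M(p_i,\epsilon_i)\to B(z_i,\epsilon_i)$ constructed earlier in this section, together with a $C^\infty$ partition of unity $\{\rho_i\}_{i=1}^N$ subordinate to $\{B^M(p_i,\epsilon_i)\}_{i=1}^N$.

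The core technical input consists of two uniform comparabilities across each chart. (a) Since $\varphi_i$ is a diffeomorphism of compact closure with bounded derivatives and the metric coefficients $g_{ij}$ are smooth and bounded on $\overline{B^M(p_i,\epsilon_i)}$, the Riemannian ball $B^M(w,r)$ is comparable (with a fixed bi-Lipschitz constant) to the Euclidean ball $B(\varphi_i(w),r)$ once $w,r$ are small; thus $\mu(B^M(w,r))\asymp((\varphi_i)_{*}\mu)(B(\varphi_i(w),r))$ with constants independent of $w$ and $r$. (b) For any $u\in\Gamma^\infty_c(\bigwedge^k B^M(p_i,\epsilon_i))$, writing $(\varphi_i)_{*} u=\sum_{I\in\mathcal{I}_{k,n}}v_I\,dx^I$, the equivalence between $W^{1,2}_{\mathcal{A}}(\bigwedge^k\Omega)$ and $W^{1,2}(\bigwedge^k\Omega)$ for a regular atlas recalled in Section \ref{S:Pre} yields the comparison $\|u\|_{W^{1,2}_0(\bigwedge^k M)}\asymp\sum_I\|v_I\|_{W^{1,2}_0(B(z_i,\epsilon_i))}$ and, pointwise on the chart, $|u(p)|^2\asymp\sum_I|v_I(\varphi_i(p))|^2$. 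Under (a), the Riemannian conditions \eqref{eq:3.18} and \eqref{eq:3.19} are equivalent to the classical Euclidean versions of the same conditions on each $(B(z_i,\epsilon_i),(\varphi_i)_{*}\mu)$.

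For the sufficient direction, given a sequence $\{u_m\}\subseteq\mathcal{B}$, decompose $u_m=\sum_{i=1}^N\rho_i u_m$, where each $\rho_i u_m$ is bounded in $W^{1,2}_0(\bigwedge^k B^M(p_i,\epsilon_i))$ up to a fixed multiplicative constant depending only on the partition of unity and the metric. Transporting through $\varphi_i$ and applying (b), I obtain, for each $i$ and each $I\in\mathcal{I}_{k,n}$, a scalar sequence bounded in $W^{1,2}_0(B(z_i,\epsilon_i))$. Maz'ja's classical compact embedding theorem \cite[Section 8.8]{Maz'ja_1985} applied to each such scalar sequence in $L^q(B(z_i,\epsilon_i),(\varphi_i)_{*}\mu)$ yields a convergent subsequence; a diagonal extraction over the finite index set $\{(i,I)\}$ produces a joint subsequence. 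Pulling back via $\varphi_i^{-1}$ and summing over $i$, the pointwise comparability in (b) and the measure comparability in (a) upgrade componentwise scalar convergence to convergence of $\{u_m\}$ in $L^q(\bigwedge^k M,\mu)$.

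For the necessary direction, I would force the conditions chart by chart. Fix $(U,\varphi)=(B^M(p_i,\epsilon_i),\varphi_i)$, a multi-index $I_0\in\mathcal{I}_{k,n}$, and a scalar test function $f\in C^\infty_c(U)$; the $k$-form $\omega:=f\,dx^{I_0}$ lies in $\Gamma^\infty_c(\bigwedge^k U)$ and, by (b), satisfies $\|\omega\|_{W^{1,2}_0(\bigwedge^k M)}\asymp\|f\|_{W^{1,2}_0(U)}$ and $|\omega|\asymp|f|$ pointwise. Hence relative compactness of $\mathcal{B}$ in $L^q(\bigwedge^k M,\mu)$ forces relative compactness of $\widetilde{\mathcal{B}}_U$ in $L^q(U,\mu)$, and Maz'ja's theorem supplies the local Euclidean version of \eqref{eq:3.18} or \eqref{eq:3.19} on each chart; combining over the finite cover via (a) yields the uniform Riemannian statement as written. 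The principal technical obstacle is verifying the norm equivalence in (b) with constants that are uniform and independent of the support of $u$, in particular for the term $\|d^{*}u\|_{L^2}$, which mixes coefficients through the metric and its first derivatives; this is handled by Gaffney-type estimates for compactly supported forms in a regular atlas, together with the boundedness of $g_{ij}$, $g^{ij}$, and $\partial g_{ij}$ on each compact chart closure.
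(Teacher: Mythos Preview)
Your proposal is correct and follows essentially the same route as the paper's proof: decompose $k$-forms componentwise in a regular coordinate chart to reduce to the scalar case, invoke the scalar compact-embedding criterion chartwise, and globalize by a finite partition of unity with a diagonal extraction. The only cosmetic difference is that the paper, instead of transporting all the way to Euclidean Maz'ja via the bi-Lipschitz ball comparison in your item~(a), cites the authors' earlier scalar Riemannian version \cite{Ngai-Ouyang_2023} at the chart level; your direct reduction to \cite{Maz'ja_1985} simply unfolds that prior step.
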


	\begin{proof}\,Assume that (\ref{eq:3.18}) and (\ref{eq:3.19}) hold. We will prove that for $q>2$, the unit ball $\mathcal{B}$
		is relatively compact in $L^{q}(\bigwedge^kM, \mu)$. 
		
		\noindent{\em Step 1.} Let $(U,\varphi)$ be a regular coordinate chart on $M$. Let $(\omega_j)_{j=1}^{\infty}$ be a bounded sequence in $\mathcal{B}_U$. 
	By \eqref{eq:1.1}, for each $j$, we write
		\begin{align*}
			\omega_j=\sum_{I\in  \mathcal{I}_{k,n} } \omega_j^Idx^I,
		\end{align*}
	where the coefficients $\omega_j^I$ are smooth real-valued functions on $U$. Hence
	\begin{align*}
		\|\omega_j\|_{W_0^{1,2}(\bigwedge^kU)}^2&=\int_U|\omega_j|^2\,dV_g+\int_U|\nabla_U\omega_j|^2\,dV_g \nonumber\\
		&=\int_U\Big(\sum_{I\in \mathcal{I}_{k,n}}|\omega_j^I|^2\,dV_g+\int_U\sum_{I,l}\bigg|\frac{\partial\omega_j^I}{\partial x^l}\bigg|^2\Big)\,dV_g \nonumber\\
		&=\sum_{I\in \mathcal{I}_{k,n}}\int_U\Big(|\omega_j^I|^2+\sum_{l}\big|\frac{\partial\omega_j^I}{\partial x^l}\big|^2\Big)\,dV_g \nonumber\\
		&=\sum_{I\in \mathcal{I}_{k,n}}\|\omega_j^I\|_{W_0^{1,2}(U)}^2.
	\end{align*}
As $\|\omega_j\|_{W_0^{1,2}(\bigwedge^kU)}\leq 1$, we have $\sum_{I\in \mathcal{I}_{k,n}}\|\omega_j^I\|_{W_0^{1,2}(U)}^2\leq 1$. Hence for each $I$, $\|\omega_j^I\|_{W_0^{1,2}(U)}^2\leq 1$.  By \cite[Lemma 18.6]{Tu_2011}, $\omega_j^I$ is smooth, and by definition, $\omega_j^I$ has compact support. Thus $\omega_j^I\in C^\infty_c(U)$ for each $I$.  Then for each $I$, $(\omega_j^I)_{j=1}^{\infty}$ is a bounded sequence in  $\widetilde{\mathcal{B}}_U$.  By \eqref{eq:3.18} and \eqref{eq:3.19}, we have
\begin{align*}
	&\lim _{\delta \rightarrow 0^{+}} \sup _{w\in U ; r \in(0, \delta)} r^{1-n / 2} \mu\left(B^M(w,r)\right)^{1 / q}=0 \quad \text {\rm for } n>2,   
\end{align*}
and
\begin{align*}
	&\lim _{\delta \rightarrow 0^{+}} \sup _{w \in U ; r \in(0, \delta)}|\ln r|^{1 / 2} \mu\left(B^M(w,r)\right)^{1 / q}=0 \quad \text {\rm for } n=2.
\end{align*}
Hence $\widetilde{\mathcal{B}}_U$ is relatively compact in $L^{q}(U, \mu)$ (see  \cite{Ngai-Ouyang_2023}).  Thus there exists $\omega_I\in L^q(U,\mu)$ such that for each $I$,
\begin{align*}
\lim_{j\rightarrow\infty}\|\omega_j^I-\omega_I\|_{L^{q}(U, \mu)}=0.
	\end{align*}
	Let $\omega:=\sum_{I\in  \mathcal{I}_{k,n} } \omega_Idx^I$. Then $\omega\in L^{q}(\bigwedge^kU, \mu)$ and
	\begin{align*}
		\lim_{j\rightarrow\infty}\|\omega_j-\omega\|_{L^{q}(\bigwedge^k U, \mu)}=\lim_{j\rightarrow\infty}\sum_{I\in \mathcal{I}_{k,n}}\|\omega_j^I-\omega_I\|_{L^{q}(U, \mu)}=0.
	\end{align*}
	Hence $\mathcal{B}_U$ is relatively compact in $L^{q}(\bigwedge^kU, \mu)$.

\noindent{\em Step 2}. By the compactness of $M$, we can choose a $C^\infty$ atlas $\{(W_\alpha,\varphi_\alpha)\}_{\alpha=1}^N$ satisfying $M\subseteq \cup_{\alpha=1}^N W_\alpha$.  By relabeling the $W_{\alpha}$, we can
choose geodesic balls $F_{\alpha}:=B^M(p_{\alpha},\epsilon_{\alpha})\subseteq W_{\alpha}$ satisfying
	$M\subseteq\bigcup_{\alpha=1}^N F_\alpha$ and $\overline{F}_{\alpha}\subseteq W_{\alpha}$.
Then $\{(F_\alpha,\varphi_\alpha|_{F_{\alpha}})\}_{\alpha=1}^N$ is a regular atlas on $M$.
Let $f_\alpha\in L^1(\bigwedge^kM)$. For each $\widetilde{\epsilon}_\alpha>0$, where $\alpha=2,\ldots,N$, there exists $\delta_\alpha\in(0,\epsilon_\alpha)$ such that for $\delta\in(0,\delta_\alpha)$,
			\begin{align}
			&\int_{\cup_{i=1}^{\alpha-1}(F_{\alpha}\cap(F_i\backslash B^M(p_i,\epsilon_i-\delta)))}|f_\alpha|^2\,dV_g<\frac{\widetilde{\epsilon}_\alpha}{2}\qquad\text{and} \label{eq:int1}\\
				&\int_{\cup_{i=1}^{\alpha-1}(F_{\alpha}\cap(F_i\backslash B^M(p_i,\epsilon_i-\delta)))}|\nabla f_\alpha|^2\,dV_g<\frac{\widetilde{\epsilon}_\alpha}{2}\label{eq:int}.
			\end{align}
		Let $\underline{\delta}:=\min\{\delta_\alpha:\alpha=2,\ldots,N\}$, $U_1:=F_1$, and $U_{\alpha}:=F_{\alpha}\backslash\overline{\cup_{i=1}^{\alpha-1} (F_\alpha\cap B^M(p_i,\epsilon_i-\underline{\delta}))}$ for $\alpha=2,\ldots,N$.
		Then $U_{\alpha}\subseteq F_{\alpha}$ for $\alpha=1,\ldots,N$, and $\{U_{\alpha}\}_{\alpha=1}^{N}$ is a  finite open cover of $M$. Let $\{g_{\alpha}\}_{\alpha=1}^{N}$ be a $C^\infty$ partition of unity subordinate to  $\{U_{\alpha}\}_{\alpha=1}^{N}$.
		Let $(\omega_j)_{j=1}^\infty$ be a bounded sequence in $\mathcal{B}$. Then we can write $\omega_j=\sum_\alpha g_{\alpha}\omega_j$, and thus ${\rm supp}(g_{\alpha}\omega_j)\subseteq U_{\alpha}$ and $g_{\alpha}\omega_j\in \Gamma^\infty_c\big(\bigwedge^kU_{\alpha}\big)$.
	Let $D_1:=U_1$ and $D_{\alpha}:=U_{\alpha}\backslash\overline{\cup_{i=1}^{\alpha-1}(F_\alpha\cap F_i)}$ for $\alpha=2,\ldots,N$. Then $D_\alpha\subseteq U_\alpha$ for $\alpha=1,\ldots,N$, and $\{D_\alpha\}_{\alpha=1}^N$ is a disjoint collection.
	As $\|\omega_j\|_{W^{1,2}_0(\bigwedge^kM)}\leq 1$, we have $\|g_\alpha \omega_j\|_{W^{1,2}_0(\bigwedge^kD_\alpha)}\leq 1$. Combining \eqref{eq:int1} and \eqref{eq:int}, we obtain $\|g_\alpha \omega_j\|_{W^{1,2}_0(\bigwedge^k(U_\alpha\backslash D_\alpha))}<\widetilde{\epsilon}_\alpha$. Hence $\|g_\alpha \omega_j\|_{W^{1,2}_0(\bigwedge^kU_\alpha)}\leq 1+\widetilde{\epsilon}_\alpha\leq2$.
		Thus $(g_{\alpha}\omega_j)$ is a bounded sequence in $ \underline{\mathcal{B}}_{{\alpha}}:=\{u \in \Gamma^\infty_c\big(\bigwedge^kU_{\alpha}\big):\|u\|_{W^{1,2}_0\left(\bigwedge^kU_{\alpha}\right)} \leq 2\}$. We know from Step 1 that for each $\alpha$, $\underline{\mathcal{B}}_{{\alpha}}$ is relatively compact in $L^{q}(\bigwedge^kU_\alpha, \mu)$.
		Hence there exists a subsequence  $(g_{1}\omega_{j_{m}}^{(1)}) \subset (g_1\omega_{j})$ converging to $\omega^{(1)}$ in $L^q(\bigwedge^kU_1, \mu)$. Similarly, there exists a subsequence  $(g_{2}\omega_{j_{m}}^{(2)}) \subset (g_{2}\omega_{j_{m}}^{(1)}) $ converging to $\omega^{(2)}$ in $L^q(\bigwedge^kU_2, \mu)$.  Continuing in this way, we see that the ``diagonal sequence" $(g_{\alpha}\omega_{j_{m}}^{(\alpha)})$ is a subsequence of $(g_{\alpha}\omega_j)$ such that for every $\alpha=1,\ldots,N$,
		\begin{align}\label{eq:cong}
		\lim_{m\rightarrow\infty}\|g_{\alpha}\omega_{j_{m}}^{(\alpha)}- \omega^{(\alpha)}\|_{L^q(\bigwedge^kU_{\alpha},\mu)}=0.
		\end{align}
		Write $\omega_{j_m}^{(N)}:=\sum_{\alpha=1}^{N}g_{\alpha}\omega_{j_{m}}^{(\alpha)}$ and $\omega:=\sum_{\alpha=1}^{N} \omega^{(\alpha)} \in L^q(\bigwedge^kM,\mu)$.
		Then
		\begin{align*}
			\|\omega_{j_m}^{(N)}-\omega\|^q_{L^q(\bigwedge^kM, \mu)}&=\int_{M}\Big|\sum_{\alpha=1}^{N} g_{\alpha}\omega_{j_{m}}^{(\alpha)}-\sum_{\alpha=1}^{N} \omega^{(\alpha)}\Big|^q\,d\mu\\
			&\leq C\sum_{\alpha=1}^{N} \int_{U_{\alpha}}|g_{\alpha}\omega_{j_{m}}^{(\alpha)}-\omega^{(\alpha)}|^q\,d\mu\\
			&\rightarrow 0 \qquad(\text{by}\,\,\eqref{eq:cong})
		\end{align*}
	as $m\rightarrow \infty$.	Hence  $\mathcal{B}$ is relatively compact in $L^q(\bigwedge^kM, \mu)$.

		Conversely, assume that for $q>2,$ the unit ball $\mathcal{B}$ is relatively compact in $L^q(\bigwedge^kM, \mu)$. We will show that \eqref{eq:3.18} and \eqref{eq:3.19} hold.		Using the compactness of $M$, we may select a finite
		 system $\{(V_i,\varphi_i)\}_{i=1}^N$ satisfying $M\subseteq \cup_{i=1}^NV_i$.  By relabeling the $V_i$, we can choose a regular atlas $\{(U_i,\varphi_i)\}_{i=1}^N$ satisfying $\overline{U}_i\subseteq V_i$ and $M\subseteq \cup_{i=1}^N U_i$. Fix $i$. Let $(u_m)$ be a bounded sequence in $\mathcal{B}_{U_i}$. Since $\mathcal{B}$ is relatively compact in $L^q(\bigwedge^kM, \mu)$, there exists $u\in L^q(\bigwedge^kM, \mu)$ such that 
			\begin{align}\label{eq:cont}
			\lim_{m \rightarrow \infty}\|u_m-u\|_{L^q(\bigwedge^kM, \mu)}=0.
			\end{align}
		Since $L^q(\bigwedge^kU_i, \mu)$ is complete, we can write $u=v_1+v_2$, where ${\rm supp}(v_1)\subseteq U_i$ and ${\rm supp}(v_2)\subseteq \partial U_i$. Suppose 
		$\int_{\partial U_i}|v_2|^qd\mu\neq 0,$
		then, since $\int_{\partial U_i}|u_m|^qd\mu= 0$, we have
		$$\lim_{m \rightarrow \infty}\|u_m-v_2\|_{L^q(\bigwedge^kM, \mu)}\neq 0.$$
		This contradicts \eqref{eq:cont}. Hence	$$\int_{\partial U_i}|v_2|^qd\mu= 0.$$ Thus $u=v_1$ $\mu$-a.e. in $\overline{U}_i$. Hence $\mathcal{B}_{U_i}$ is relatively compact in $L^q(\bigwedge^kU_i, \mu)$. For each $I$, let $\omega_j^I$ be a bounded sequence in $\widetilde{\mathcal{B}}_{U_i}$, i.e., $\omega_j^I\in C^\infty_c(U_i)$, and $\|\omega_j^I\|_{W_0^{1,2}(U_i)}\leq 1$.  For each $j$, we let $\omega_j=\sum_{I\in \mathcal{I}_{k,n}} \omega_j^Idx^I$. Then  $\omega_j\in \Gamma^\infty_c(\bigwedge^kU_i)$ and
		\begin{align*}
			\|\omega_j\|_{W_0^{1,2}(\bigwedge^kU_i)}=\sum_{I\in \mathcal{I}_{k,n}}\|\omega_j^I\|_{W_0^{1,2}(U_i)}\leq \sum_{I\in \mathcal{I}_{k,n}}1\leq C, \quad\text{for each}\,\, j.
		\end{align*}
		Let $$\overline{\mathcal{B}}_{i}:=\Big\{u \in \Gamma^\infty_c\Big({\bigwedge}^kU_i\Big):\|u\|_{W^{1,2}_0(\bigwedge^kU_i)} \leq C\Big\}.$$ Then $\omega_j$ is a bounded sequence in $\overline{\mathcal{B}}_{i}$. Since $\mathcal{B}_{U_i}$ is relatively compact in $L^q(\bigwedge^kU_i , \mu)$, $\overline{\mathcal{B}}_{i}$ is relatively compact in $L^q(\bigwedge^kU_i, \mu)$. Hence there exists $\omega\in L^q(\bigwedge^kU_i, \mu)$ such that 
			\begin{align*}
			\lim_{j\rightarrow\infty}\|\omega_j-\omega\|_{L^{q}(\bigwedge^k U_i, \mu)}=0.
		\end{align*}
		Write $\omega:=\sum_{I\in \mathcal{I}_{k,n}}\omega_Idx^I$. Then $\omega_I\in L^{q}(U_i, \mu)$ for each $I$. Hence for each $I$,
		\begin{align*}
			\lim_{j\rightarrow\infty}\|\omega_j^I-\omega_I\|_{L^{q}(U_i, \mu)}=0.
		\end{align*}
		It follows that $\widetilde{\mathcal{B}}_{U_i}$ is relatively compact in $L^q(U_i, \mu)$, and thus
		\begin{align}\label{1}
			&\lim _{\delta \rightarrow 0^{+}} \sup _{w\in U_i ; r \in(0, \delta)} r^{1-n / 2} \mu\left(B^M(w,r)\right)^{1 / q}=0 \quad \text {\rm for } n>2
		\end{align}
		and
		\begin{align}\label{2}
			&\lim _{\delta \rightarrow 0^{+}} \sup _{w \in U_i ; r \in(0, \delta)}|\ln r|^{1 / 2} \mu\left(B^M(w,r)\right)^{1 / q}=0 \quad \text {\rm for } n=2.
		\end{align}
Combining \eqref{1}, \eqref{2}, and the fact that $\{U_i\}_{i=1}^N$ is a finite open cover of $M$,  {\color{magenta}we see that} \eqref{eq:3.18} and \eqref{eq:3.19} hold.
	\end{proof}

	We say that $\mu$ is \textit{upper $s$-regular} for $s>0$ if there exists some $c>0$ such that for all $x \in {\rm supp}(\mu)$ and all $r$ satisfying $0 \leq r \leq \operatorname{diam}({\rm supp}(\mu))$,
	\begin{align*}
		\mu\left(B^{M}(x,r)\right) \leq c r^{s}.
	\end{align*}
	\textit{Lower $s$-regularity} is defined by reversing the inequality. The following theorem is well known; see, e.g., \cite[Lemma 3.1]{Ngai-Ouyang_2023}.
	\begin{lem}\label{lem:3.1}
		Let $\mu$ be a compactly supported finite positive Borel measure on a compact manifold $M$.
		\begin{enumerate}
			\item[(a)]  If $\mu$ is upper (resp. lower) $s$-regular for some $s>0$, then $\underline{\operatorname{dim}}_{\infty}(\mu) \geq s$ (resp. $\left.\overline{\operatorname{dim}}_{\infty}(\mu) \leq s\right)$.
			\item[(b)]  Conversely, if $\underline{\operatorname{dim}}_{\infty}(\mu) \geq s$ for some $s>0$, then $\mu$ is upper $\alpha$-regular for any $0<\alpha<s$.
		\end{enumerate}
	\end{lem}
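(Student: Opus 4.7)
The plan is to work directly from the definitions of $\underline{\dim}_\infty(\mu)$ and $\overline{\dim}_\infty(\mu)$, converting the regularity bounds into bounds on $\sup_x \mu(B^M(x,\delta))$ and then taking logarithms. Throughout, the key observation is that $\ln \delta < 0$ for small $\delta > 0$, so dividing by $\ln \delta$ reverses inequalities; a secondary observation is that additive constants $\ln c$ become negligible under division by $\ln \delta \to -\infty$.

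For part (a), assume $\mu$ is upper $s$-regular, so $\mu(B^M(x,r)) \leq c r^s$ for all $x \in \operatorname{supp}(\mu)$ and all sufficiently small $r>0$. Taking the supremum over $x$ and then $\ln$ gives $\ln(\sup_x \mu(B^M(x,\delta))) \leq \ln c + s \ln \delta$; dividing by $\ln\delta < 0$ reverses the inequality and produces a term $\ln c / \ln\delta \to 0$, so the liminf is at least $s$, proving $\underline{\dim}_\infty(\mu) \geq s$. For lower $s$-regularity, pick any $x_0 \in \operatorname{supp}(\mu)$; then $\sup_x \mu(B^M(x,\delta)) \geq \mu(B^M(x_0,\delta)) \geq c \delta^s$, and the analogous computation with the inequality reversed yields $\overline{\dim}_\infty(\mu) \leq s$.

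For part (b), assume $\underline{\dim}_\infty(\mu) \geq s$ and fix $0 < \alpha < s$. By the definition of liminf, there exists $\delta_0 \in (0,1)$ such that for every $\delta \in (0,\delta_0)$,
\begin{align*}
\frac{\ln(\sup_x \mu(B^M(x,\delta)))}{\ln \delta} \geq \alpha.
\end{align*}
Again using $\ln\delta < 0$, this rearranges to $\sup_x \mu(B^M(x,\delta)) \leq \delta^\alpha$, so $\mu(B^M(x,r)) \leq r^\alpha$ for all $x \in \operatorname{supp}(\mu)$ and all $r \in (0,\delta_0)$. For $r \in [\delta_0, \operatorname{diam}(\operatorname{supp}(\mu))]$, use the trivial bound $\mu(B^M(x,r)) \leq \mu(M) \leq (\mu(M)/\delta_0^\alpha) r^\alpha$. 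Choosing $c := \max\{1, \mu(M)/\delta_0^\alpha\}$ gives $\mu(B^M(x,r)) \leq c r^\alpha$ for every admissible $r$, proving upper $\alpha$-regularity.

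There is no real obstacle here; the only care needed is bookkeeping the direction of inequalities under division by the negative quantity $\ln\delta$ and handling the ``large $r$'' range separately in part (b) by absorbing everything into a single constant. The statement and argument are essentially identical to the corresponding Euclidean fact cited as \cite[Lemma 3.1]{Ngai-Ouyang_2023}, the only substantive change being that balls $B^M(x,r)$ are now Riemannian balls rather than Euclidean balls, which does not affect any step of the proof since we never use geometric structure beyond the measure of balls.
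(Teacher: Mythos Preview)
Your proof is correct. Note, however, that the paper does not actually prove this lemma: it states the result as well known and refers to \cite[Lemma 3.1]{Ngai-Ouyang_2023} for the Euclidean version. Your argument is precisely the standard elementary derivation one would expect---the only ingredients being the logarithm/division-by-$\ln\delta$ bookkeeping and, in part (b), the finite-measure bound for large radii---and, as you observe, nothing in it depends on whether the balls are Riemannian or Euclidean.
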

	The proof of Theorem \ref{thm:3.12} mainly uses Theorem \ref{thm:3.11} and Lemma \ref{lem:3.1}(b); it is similar to that of \cite[Theorem 3.2]{Hu-Lau-Ngai_2006} and so is omitted.
	\begin{thm}\label{thm:3.12}Let $M$ be a compact Riemannian $n$-manifold. Let $n \geq 2$ and $2<q<\infty$, and let $\mu$ be a finite positive Borel measure on $M$ with compact support.  
		\begin{enumerate}
			\item[(a)] If $\underline{\operatorname{dim}}_{\infty}(\mu)>q(n-2) / 2$, then $\mathcal B$ is relatively compact in $L^{q}(\bigwedge^kM, \mu)$.
			\item[(b)] If $\underline{\operatorname{dim}}_{\infty}(\mu)<q(n-2) / 2$, then $\mathcal B$ is not relatively compact in $L^{q}(\bigwedge^kM, \mu)$.
		\end{enumerate}
	\end{thm}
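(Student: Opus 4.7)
The plan is to reduce both parts of Theorem~\ref{thm:3.12} to Theorem~\ref{thm:3.11} by translating the $L^\infty$-dimension hypothesis into the pointwise ball-measure conditions \eqref{eq:3.18}--\eqref{eq:3.19} via Lemma~\ref{lem:3.1}(b). The whole argument is a matching of exponents and parallels the Euclidean proof of \cite[Theorem~3.2]{Hu-Lau-Ngai_2006}; the passage to manifolds has already been absorbed into Theorem~\ref{thm:3.11}, so no new geometric input is needed.

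For part (a), assume $\underline{\dim}_\infty(\mu)>q(n-2)/2$. I first choose an auxiliary exponent $s$ with $q(n-2)/2<s<\underline{\dim}_\infty(\mu)$. By Lemma~\ref{lem:3.1}(b), $\mu$ is upper $s$-regular, so there exists $c>0$ with $\mu(B^M(w,r))\leq c\,r^s$ for all $w\in M$ and all sufficiently small $r$. When $n>2$, this yields
\[
r^{1-n/2}\,\mu(B^M(w,r))^{1/q}\leq c^{1/q}\,r^{\,1-n/2+s/q},
\]
and the exponent $1-n/2+s/q$ is strictly positive precisely because $s>q(n-2)/2$, which verifies \eqref{eq:3.18}. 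When $n=2$ the hypothesis reduces to $\underline{\dim}_\infty(\mu)>0$; any $s\in(0,\underline{\dim}_\infty(\mu))$ works, and $|\ln r|^{1/2}r^{s/q}\to 0$ as $r\to 0^+$ verifies \eqref{eq:3.19}. In both cases Theorem~\ref{thm:3.11} gives that $\mathcal{B}$ is relatively compact in $L^q(\bigwedge^kM,\mu)$.

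For part (b), I argue by contrapositive. Since $\underline{\dim}_\infty(\mu)\geq 0$ for every positive bounded measure, the hypothesis forces $n\geq 3$. Suppose, for contradiction, that $\mathcal{B}$ is relatively compact in $L^q(\bigwedge^kM,\mu)$; then Theorem~\ref{thm:3.11} forces \eqref{eq:3.18}. Pick $s$ with $\underline{\dim}_\infty(\mu)<s<q(n-2)/2$. Unwinding the $\liminf$ defining $\underline{\dim}_\infty(\mu)$, I can extract a sequence $\delta_m\to 0^+$ and points $w_m\in\operatorname{supp}(\mu)$ with $\mu(B^M(w_m,\delta_m))\geq\delta_m^s$. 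Then
\[
\delta_m^{1-n/2}\,\mu(B^M(w_m,\delta_m))^{1/q}\geq\delta_m^{\,1-n/2+s/q}\longrightarrow\infty,
\]
since $s/q<(n-2)/2$ makes the exponent negative. This contradicts \eqref{eq:3.18}, so $\mathcal{B}$ cannot be relatively compact. The only step requiring care is the conversion of the $\liminf$ into the sequential lower bound with the correct strict inequality on $s$; this is routine bookkeeping and I do not anticipate any substantive obstacle.
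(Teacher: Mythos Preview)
Your proposal is correct and takes essentially the same approach as the paper: reduce to Theorem~\ref{thm:3.11} via the upper-regularity supplied by Lemma~\ref{lem:3.1}(b), matching the exponents exactly as in \cite[Theorem~3.2]{Hu-Lau-Ngai_2006}. The paper in fact omits the details and simply cites this route, so your write-up is a faithful expansion of what the authors intended.
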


We let $\Gamma^C(\bigwedge^k\overline{\Omega})$ be the subspace of $\Gamma^C(\bigwedge^k\Omega)$ consisting of all  $k$-forms in $\Gamma^C(\bigwedge^k\Omega)$ that are bounded and uniformly continuous on $\Omega$. Define
	$$\|u\|_{\Gamma^C(\bigwedge^k\overline{\Omega})}:=\sup_{x\in \overline{\Omega}}|u(x)|\qquad\text{and}\qquad\|u\|_{\infty}:=\ess \sup_{x\in \Omega}|u(x)|.$$
To prove Theorem \ref{thm:1.1}, we need the following theorem. 
	\begin{thm}\label{lem:3.13}
		Let $M$ be a compact Riemannian $1$-manifold, and let $\Omega \subseteq M$ be an open set. Then $W^{1,2}_0(\bigwedge^k\Omega)$ is compactly embedded in $\Gamma^C(\bigwedge^k\overline{\Omega})$.
	\end{thm}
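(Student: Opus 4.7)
The plan is to reduce to the classical one-dimensional Sobolev embedding via local coordinates and then patch with a partition of unity. Since $n=1$, the only nontrivial cases are $k=0$ and $k=1$; in any regular coordinate chart $(U,\varphi)$ with $\varphi(U)$ an open interval $I\subseteq\R$, a $k$-form $\omega$ is represented by a single scalar coefficient $f$, via $\omega=f$ (for $k=0$) or $\omega=f\,dx^{1}$ (for $k=1$). Because the metric components $g_{ij}$ are smooth and bounded (with smooth bounded inverse) on $\overline{U}$, the $W^{1,2}(\bigwedge^{k}U)$-norm of $\omega$ is comparable, with constants depending only on the atlas, to the classical Euclidean $W^{1,2}(I)$-norm of the pullback $f\circ\varphi^{-1}$, and similarly the pointwise norm $|\omega(p)|$ is comparable to $|f(\varphi(p))|$.

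Next I would invoke the classical one-dimensional Sobolev embedding for a bounded interval $I\subset\R$: any $f\in W^{1,2}_{0}(I)$ is (up to a null set) $\tfrac12$-Hölder continuous, with
\begin{align*}
|f(x)-f(y)|\leq \|f'\|_{L^{2}(I)}\,|x-y|^{1/2}\quad\text{and}\quad \|f\|_{\infty}\leq C\|f\|_{W^{1,2}(I)},
\end{align*}
both of which are immediate from the fundamental theorem of calculus and the Cauchy--Schwarz inequality. Consequently, a bounded sequence in $W^{1,2}_{0}(I)$ is uniformly bounded and equicontinuous, and by the Arzelà--Ascoli theorem has a subsequence converging uniformly on $\overline{I}$. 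This gives the compact embedding $W^{1,2}_{0}(I)\hookrightarrow C(\overline{I})$ in the scalar, one-dimensional case.

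For the global statement, I would use the compactness of $M$ to select a finite regular atlas $\{(U_{i},\varphi_{i})\}_{i=1}^{N}$ on $M$ together with a subordinate smooth partition of unity $\{\rho_{i}\}_{i=1}^{N}$, exactly as in the construction preceding Theorem \ref{thm:3.11}. Given a bounded sequence $\{\omega_{m}\}\subset W^{1,2}_{0}(\bigwedge^{k}\Omega)$, write $\omega_{m}=\sum_{i}\rho_{i}\omega_{m}$. Each sequence $\{\rho_{i}\omega_{m}\}_{m}$ is bounded in $W^{1,2}_{0}(\bigwedge^{k}(U_{i}\cap\Omega))$, hence by the scalar argument above, applied to the coefficient on each chart, admits a subsequence converging uniformly on $\overline{U}_{i}\cap\overline{\Omega}$. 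A diagonal extraction over $i=1,\dots,N$ yields a common subsequence $\{\omega_{m_{j}}\}$ such that every $\{\rho_{i}\omega_{m_{j}}\}_{j}$ converges uniformly; summing over $i$ shows that $\{\omega_{m_{j}}\}$ converges in the sup norm on $\overline{\Omega}$, with continuous and bounded limit, hence in $\Gamma^{C}(\bigwedge^{k}\overline{\Omega})$.

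The main step is the one-dimensional Hölder estimate, which is the heart of why compact embedding into $C$ holds only at $n=1$ (for higher $n$ one only gets compact embedding into $L^{q}$). The remaining obstacle is purely bookkeeping: verifying that the local Euclidean $W^{1,2}$-norms of the coefficient functions $(\rho_{i}\omega_{m})\circ\varphi_{i}^{-1}$ are bounded uniformly in $m$ with constants independent of $i$, and checking that the uniform convergence of finitely many pieces assembles into convergence in the norm of $\Gamma^{C}(\bigwedge^{k}\overline{\Omega})$; both follow from the smoothness and compact support of the $\rho_{i}$ and the finiteness of the cover.
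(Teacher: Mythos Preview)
Your proposal is correct and follows essentially the same approach as the paper: reduce to the scalar compact embedding $W^{1,2}_0(U)\hookrightarrow C(\overline{U})$ in a regular chart, then patch via a finite atlas, a subordinate partition of unity, and a diagonal extraction. The only difference is that the paper quotes the scalar embedding from an external reference, whereas you supply it directly via the one-dimensional H\"older estimate and Arzel\`a--Ascoli; your observation that for $n=1$ there is a single coefficient function in each chart is a pleasant simplification of the paper's general $\sum_{I}$ bookkeeping.
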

	\begin{proof}
	Let $(U,\varphi)$ be a regular coordinate chart on $M$.  Let $(\omega_j)_{j=1}^{\infty}$ be a bounded sequence in $W^{1,2}_0(\bigwedge^kU)$.  By \eqref{eq:1.1}, for each $j$, we write
	\begin{align*}
		\omega_j=\sum_{I\in \mathcal{I}_{k,n}} \omega_j^Idx^I,
	\end{align*}
	where the coefficients $\omega_j^I$ are measurable functions on $U$. Hence
	\begin{align*}
		\|\omega_j\|_{W_0^{1,2}(\bigwedge^kU)}^2=\sum_{I\in \mathcal{I}_{k,n}}\|\omega_j^I\|_{W_0^{1,2}(U)}^2
	\end{align*}
and for each $I$, $\omega_j^I\in W_0^{1,2}(U)$.
 Since $W^{1,2}_0(U)$ is compactly embedded in $C(\overline{U})$ (see \cite{Ngai-Ouyang_2023}), there exists $\omega_I\in C(\overline{U})$ such that for each $I$,
	\begin{align*}
		\lim_{j\rightarrow\infty}\|\omega_j^I-\omega_I\|_{L^{q}(U, \mu)}=0.
	\end{align*}
	Let $\omega:=\sum_{I\in \mathcal{I}_{k,n}}\omega^Idx^I$. Then $\omega\in \Gamma^C(\bigwedge^k\overline{U})$. Hence
	\begin{align*}
		\lim_{j\rightarrow\infty}\|\omega_j-\omega\|_{\Gamma^C(\bigwedge^k\overline{U})}\leq C\lim_{j\rightarrow\infty}\sum_{I\in \mathcal{I}_{k,n}}\|\omega_j^I-\omega^I\|_{C(\overline{U})}=0.
	\end{align*}
Thus $W^{1,2}_0(\bigwedge^kU)$ is compactly embedded in $\Gamma^C(\bigwedge^k\overline{U})$.

By the compactness of $\Omega$, we may select a finite
 system $\{(V_\alpha,\varphi_\alpha)\}_{\alpha=1}^N$ satisfying $\Omega\subseteq \cup_{\alpha=1}^NV_\alpha$.   By relabeling the $V_\alpha$, we can
	 choose open sets $U_{\alpha}\subseteq V_{\alpha}$ and a partition of unity $\{g_{\alpha}\}_{\alpha=1}^{N}$ satisfying: 
	\begin{align*}
		\Omega\subseteq\bigcup_{\alpha=1}^N U_\alpha,\,\, \overline{U}_{\alpha}\subseteq V_{\alpha},\,\,\supp(g_\alpha)\subseteq V_{\alpha},\,\,\text{and}\,\,\sum_{\alpha=1}^N g_{\alpha}(x)=1,\,\,x\in \Omega.
	\end{align*}
	Then $\{(U_\alpha,\varphi_\alpha)\}_{\alpha=1}^N$ is a regular atlas on $\Omega$. Let $(\omega_j)$ be a bounded sequence in $W^{1,2}_0(\bigwedge^k\Omega)$. Then we can write $\omega_j=\sum_\alpha g_{\alpha}\omega_j$, and thus ${\rm supp}(g_{\alpha}\omega_j)\subseteq U_{\alpha}$ and $g_{\alpha}\omega_j\in W^{1,2}_0(\bigwedge^kU_{\alpha})$.
	Thus $(g_{\alpha}\omega_j)$ is a bounded sequence in $ W^{1,2}_0(\bigwedge^kU_{\alpha})$. We know that for each $\alpha$, $  W^{1,2}_0(\bigwedge^kU_{\alpha})$ is relatively compact in $\Gamma^C(\bigwedge^k\overline{U}_{\alpha})$.
	Hence there exists a subsequence  $(g_{1}\omega_{j_{m}}^{(1)}) \subset (g_1\omega_{j})$ converging to $\omega^{(1)}$ in $\Gamma^C(\bigwedge^k\overline{U}_{1})$. Similarly, there exists a subsequence  $(g_{2}\omega_{j_{m}}^{(2)}) \subset (g_{2}\omega_{j_{m}}^{(1)}) $ converging to $\omega^{(2)}$ in $\Gamma^C(\bigwedge^k\overline{U}_{2})$.  Continuing in this way, we see that the ``diagonal sequence" $(g_{\alpha}\omega_{j_{m}}^{(\alpha)})$ is a subsequence of $(g_{\alpha}\omega_j)$ such that for every $\alpha=1,\ldots,N$,
	\begin{align}\label{eq:cong1}
	 \lim_{m\rightarrow\infty}\|g_{\alpha}\omega_{j_{m}}^{(\alpha)}- \omega^{(\alpha)}\|_{\Gamma^C(\bigwedge^k\overline{U}_{\alpha})}=0.
	 \end{align}
	Write $\omega_{j_m}^{(N)}:=\sum_{\alpha=1}^{N}g_{\alpha}\omega_{j_{m}}^{(\alpha)}$ and $\omega:=\sum_{\alpha=1}^{N} \omega^{(\alpha)} \in \Gamma^C(\bigwedge^k\overline{\Omega})$.
	Then
	\begin{align*}
		\lim_{m\rightarrow \infty}\|\omega_{j_m}^{(N)}-\omega\|_{\Gamma^C(\bigwedge^k\overline{\Omega})}&=	\lim_{m\rightarrow \infty}\Big\|\sum_{\alpha=1}^Ng_{\alpha}\omega_{j_m}^{(\alpha)}-\sum_{\alpha=1}^N\omega^{(\alpha)}\Big\|_{\Gamma^C(\bigwedge^k\overline{\Omega})}\\
		&\leq 	\lim_{m\rightarrow \infty}\sum_{\alpha=1}^{N} \|g_{\alpha}\omega_{j_m}^{(\alpha)}-\omega^{(\alpha)}\|_{\Gamma^C(\bigwedge^k\overline{\Omega})}
= 0\qquad(\text{by}\,\,\eqref{eq:cong1}).
	\end{align*}
	Hence $W^{1,2}_0(\bigwedge^k\Omega)$ is compactly embedded in $\Gamma^C(\bigwedge^k\overline{\Omega})$.
		\end{proof}

	\begin{proof}[Proof of Theorem \ref{thm:1.1}]\,
	 For the case $n=1$, we have by Theorem \ref{lem:3.13} that $W^{1,2}_0(\bigwedge^k\Omega)$ is compactly embedded in $\Gamma^C(\bigwedge^k\overline{\Omega})$. Let $(u_m)$ be a bounded sequence in $W^{1,2}_0(\bigwedge^k\Omega)$. There exists a subsequence $(u_{m_j})$ that converges to some $u\in \Gamma^C(\bigwedge^k\overline{\Omega})$. Hence
	\begin{align*}
		\lim\limits_{j\rightarrow\infty}\|u_{m_j}-u\|_{\Gamma^C(\bigwedge^k\overline{\Omega})}=\lim\limits_{j\rightarrow\infty}\|u_{m_j}-u\|_\infty=0.
	\end{align*}
	We know $u_{m_j}\in L^{2}(\bigwedge^k\Omega, \mu)$ and $u\in \Gamma^C(\bigwedge^k\overline{\Omega})\subseteq L^{2}(\bigwedge^k\Omega, \mu)$. Hence
	\begin{align*}
		\lim\limits_{j\rightarrow\infty} \int_\Omega|u_{m_j}-u|^2\,\,d\mu\leq\lim\limits_{j\rightarrow\infty}\int_\Omega\|u_{m_j}-u\|^2_\infty \,d\mu=\lim\limits_{j\rightarrow\infty}\|u_{m_j}-u\|^2_\infty\int_\Omega \,d\mu=0.
	\end{align*}
	Hence $(u_{m_j})$ converges to $u$ in $L^{2}(\bigwedge^k\Omega, \mu)$. It follows that $W^{1,2}_0(\bigwedge^k\Omega)$ is compactly embedded in $L^{2}(\bigwedge^k\Omega, \mu)$, and thus (PI) holds. Moreover, the embedding $\operatorname{dom}(\mathcal{E}) \hookrightarrow L^{2}(\bigwedge^k\Omega, \mu)$ is compact.  
	
	Using Theorem \ref{thm:3.12}, we can prove that the hypotheses of Theorem \ref{thm:1.1} are satisfied if $n\geq 2$. The proof is similar to that of \cite[Theorem 1.1]{Hu-Lau-Ngai_2006} and is omitted.
\end{proof}

		\begin{proof}[Proof of Theorem \ref{thm:1.2}]\, This is a direct consequence of Theorem \ref{thm:1.1} and \cite[Theorem B.1.13]{Kigami_2001}; we omit the proof.		\end{proof}
	
	\section{Proof of Theorem \ref{thm:1.3}} \label{S:H}
\setcounter{equation}{0}
		
			In this section, we show that the classical Hodge theorem holds when $\Omega=M$ and the measure $\mu$ is absolutely continuous with a positive and bounded density.
			
			\begin{lem}\label{prop:5.1}
		Assume the same hypotheses of Theorem \ref{thm:1.3}. Then $\mathcal{N}_k=\{0\}$ and hence $\mathcal{N}_k^\perp=W_0^{1,2}\big({\bigwedge}^kM\big)$.
		\end{lem}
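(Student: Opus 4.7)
The plan is to exploit the equivalence of null sets for $\mu$ and the Riemannian volume $V_g$ under the absolute continuity hypothesis, which immediately forces any element of $\mathcal{N}_k$ to vanish.

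Write $\mu = \rho \, dV_g$, where by hypothesis $\rho$ is bounded and strictly positive. The crucial consequence is that a Borel subset of $M$ is $\mu$-null if and only if it is $V_g$-null, so ``$\mu$-a.e.'' and ``$V_g$-a.e.'' coincide for measurable $k$-forms.

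Now fix $u\in\mathcal{N}_k\subseteq W_0^{1,2}(\bigwedge^k M)$, and let $\overline{u}=\iota(u)$ denote its $L^2(\bigwedge^k M,\mu)$-representative, so that by the definition of $\mathcal{N}_k$,
\begin{align*}
\int_M |\overline{u}|^2\, d\mu = \|\iota(u)\|^2_{L^2(\bigwedge^k M,\mu)} = 0.
\end{align*}
Since $\rho>0$, this forces $|\overline{u}|=0$ $\mu$-a.e., and by the coincidence of null sets above, $|\overline{u}|=0$ $V_g$-a.e. as well. Because $\overline{u}$ was constructed so that $\overline{u}=u$ $V_g$-a.e. (this is exactly property (1) together with the convergence of a subsequence in the $W_0^{1,2}$-norm giving pointwise $V_g$-a.e.\ convergence), we conclude $u=0$ in $L^2(\bigwedge^k M)$, and therefore $u=0$ as an element of $W_0^{1,2}(\bigwedge^k M)$.

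Hence $\mathcal{N}_k=\{0\}$. Since $W_0^{1,2}(\bigwedge^k M) = \mathcal{N}_k \oplus \mathcal{N}_k^\perp$ as an orthogonal decomposition, this immediately yields $\mathcal{N}_k^\perp = W_0^{1,2}(\bigwedge^k M)$. There is no real obstacle here — the statement is a direct bookkeeping consequence of the positivity and boundedness of $\rho$ and the definition of the representative $\overline{u}$. The only point requiring a touch of care is invoking the $V_g$-a.e.\ identification of $u$ and $\overline{u}$, which follows from the construction of $\iota$ reviewed in Section~\ref{S:Pre}.
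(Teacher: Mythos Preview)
Your proof is correct and follows essentially the same argument as the paper: use positivity of the density $\rho$ to pass from $\|\iota(u)\|_{L^2(\bigwedge^k M,\mu)}=0$ to $\iota(u)=0$ $V_g$-a.e., then invoke $\iota(u)=u$ $V_g$-a.e.\ to conclude $u=0$ in $W_0^{1,2}(\bigwedge^k M)$. Your added remarks on the coincidence of null sets and the justification of $\overline{u}=u$ $V_g$-a.e.\ are slightly more explicit than the paper's version but do not change the approach.
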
 
	\begin{proof}
	By the definitions of $\iota$ and $\mathcal{N}_k$, for any $u\in \mathcal{N}_k$,
	\begin{align}\label{eq:jj1}
	\int_M \langle \iota(u),\iota(u)\rangle_g\,d\mu=0.
	\end{align}
	By assumption,	$d\mu=\rho\, dV_g,$ where $\rho>0$ is a density function. Thus $\langle \iota(u),\iota(u)\rangle_g=0$ $V_g$-a.e., and hence $\iota(u)=0$ $V_g$-a.e. As $\iota(u)=u$ $V_g$-a.e., we have
 $u=0$ $V_g$-a.e. This completes the proof.	
	\end{proof}
\begin{prop}\label{prop:1}
		Assume the same hypotheses of Theorem \ref{thm:1.3}. Then $\mathcal{H}^k_{\mu}(M)=\mathcal{H}^k(M)=\widetilde{\mathcal{H}}^k(M)$.
\end{prop}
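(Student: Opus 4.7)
The plan is to establish the three-way equality by a direct pair of containments, using the variational characterization of $\Delta_\mu^k$ supplied by \eqref{no1} together with the energy identity $\mathcal{E}(\omega,\omega) = \int_M\bigl(|d\omega|^2+|d^*\omega|^2\bigr)\,dV_g$ from \eqref{eq(1.1)}. The crucial structural input is Lemma \ref{prop:5.1}: because $\rho$ is bounded below away from $0$, $\mathcal{N}_k=\{0\}$ and hence $\operatorname{dom}(\mathcal{E})$ may be identified (through $\iota$) with $W_0^{1,2}(\bigwedge^kM)$. In particular, the boundedness of $\rho$ from above makes $L^2(\bigwedge^kM,\mu)$ and $L^2(\bigwedge^kM)$ agree as sets, so passage between the two integrals below loses no information.

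For $\widetilde{\mathcal{H}}^k(M)\subseteq \mathcal{H}^k_\mu(M)$, take $\omega\in\widetilde{\mathcal{H}}^k(M)$; then $\omega\in W_0^{1,2}(\bigwedge^kM)$ with $d\omega=d^*\omega=0$. By Lemma \ref{prop:5.1}, $\omega\in\operatorname{dom}(\mathcal{E})$, and for every $v\in\operatorname{dom}(\mathcal{E})$,
\[
\mathcal{E}(\omega,v)=\int_M\bigl(\langle d\omega,dv\rangle_g+\langle d^*\omega,d^*v\rangle_g\bigr)\,dV_g=0=\langle 0,v\rangle_\mu .
\]
The characterization \eqref{no1} then puts $\omega\in\operatorname{dom}(\Delta_\mu^k)$ with $\Delta_\mu^k\omega=0$, so $\omega\in\mathcal{H}^k_\mu(M)$. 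Conversely, if $\omega\in\mathcal{H}^k_\mu(M)$, then $\omega\in\operatorname{dom}(\Delta_\mu^k)\subseteq\operatorname{dom}(\mathcal{E})=\iota\bigl(W_0^{1,2}(\bigwedge^kM)\bigr)$, and by \eqref{eq(2.2)},
\[
0=\langle-\Delta_\mu^k\omega,\omega\rangle_\mu=\mathcal{E}(\omega,\omega)=\int_M\bigl(|d\omega|^2+|d^*\omega|^2\bigr)\,dV_g ,
\]
forcing $d\omega=d^*\omega=0$ ($V_g$-a.e.), i.e., $\omega\in\widetilde{\mathcal{H}}^k(M)$. This proves $\mathcal{H}^k_\mu(M)=\widetilde{\mathcal{H}}^k(M)$.

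The identity $\mathcal{H}^k(M)=\widetilde{\mathcal{H}}^k(M)$ is the same argument applied to the measure $\mu=V_g$, which trivially fulfils the hypotheses of Lemma \ref{prop:5.1} (density $\rho\equiv 1$); the right-hand side of \eqref{no1} becomes an ordinary $L^2$ inner product, but the variational step is unchanged. No additional ingredient is required. The only point requiring real care is the identification $\iota(u)\leftrightarrow u$: without Lemma \ref{prop:5.1} one cannot freely move between a $W_0^{1,2}$-representative (on which $d,d^*$ are defined via \eqref{eq:d} and \eqref{eq:ad}) and its $L^2(\bigwedge^kM,\mu)$-image (on which $\Delta_\mu^k$ acts), so the absolute continuity of $\mu$ with positive bounded density is used precisely to make $\iota$ injective and $\operatorname{dom}(\mathcal{E})$ coincide with $W_0^{1,2}(\bigwedge^kM)$. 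Beyond this bookkeeping there is no genuine obstacle.
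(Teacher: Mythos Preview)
Your proof is correct and takes a cleaner route than the paper's. Both arguments rely on Lemma~\ref{prop:5.1} to identify $\operatorname{dom}(\mathcal{E})$ with $W_0^{1,2}(\bigwedge^kM)$, but where the paper establishes $\mathcal{H}^k_\mu(M)=\mathcal{H}^k(M)$ via an eigenform expansion (writing $\omega=\sum_m c_m\psi_m$ in an orthonormal basis of eigenforms of $\Delta^k$, computing $\langle\Delta^k\omega,\omega\rangle_{L^2}=\sum_m\lambda_mc_m^2=0$, and deducing that all coefficients beyond the kernel vanish), you go straight through $\widetilde{\mathcal{H}}^k(M)$: from $\Delta_\mu^k\omega=0$ you obtain $\mathcal{E}(\omega,\omega)=0$ directly via \eqref{eq(2.2)}, hence $d\omega=d^*\omega=0$, and the converse follows from \eqref{no1} with $f=0$. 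Applying the identical argument with $\mu=V_g$ then gives $\mathcal{H}^k(M)=\widetilde{\mathcal{H}}^k(M)$, so you never compare $\mathcal{H}^k_\mu$ and $\mathcal{H}^k$ head-on. Your approach thus avoids invoking the spectral decomposition (Theorem~\ref{thm:1.2}) altogether; the paper's detour through the eigenbasis is not needed once one observes that $\mathcal{E}(\omega,\omega)=0$ already encodes $d\omega=d^*\omega=0$.
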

\begin{proof} By Lemma \ref{prop:5.1}, we have $\mathcal{N}_k=\{0\}$ and  $\mathcal{N}_k^{\perp}=W^{1,2}_0(\bigwedge^kM)$. By \eqref{eq(2.2)}, for $\omega\in \operatorname{dom}(\Delta_{\mu}^k)$ and for any $u\in \operatorname{dom}(\mathcal{E})$, we have
		\begin{align*}
		\langle -\Delta_{\mu}^k\omega,u\rangle_{\mu}=\int_M \langle d\omega,du\rangle_g\,dV_g+\int_M \langle d^*\omega,d^*u\rangle_g\,dV_g.
	\end{align*}	
It follows from \eqref{no1} that $\omega\in \operatorname{dom}(\Delta^k)$ and hence		
		\begin{align}\label{eq:5.2*}
			\langle -\Delta_{\mu}^k\omega,u\rangle_{\mu}=\int_M \langle d\omega,du\rangle_g\,dV_g+\int_M \langle d^*\omega,d^*u\rangle_g\,dV_g=\langle -\Delta^k\omega,u\rangle_{L^2(\bigwedge^kM)}.
			\end{align}
			\noindent{\em Step 1.} We first prove $\mathcal{H}^k_{\mu}(M)\subseteq \mathcal{H}^k(M)$.
		Let $\omega\in \mathcal{H}^k_{\mu}(M)$, i.e., $\omega\in \operatorname{dom}(\Delta_{\mu}^k)$ and  $\Delta_{\mu}^k\omega=0$. Then $\omega\in \operatorname{dom}(\Delta^k)$. For any  $u\in \operatorname{dom}(\mathcal{E})$, by \eqref{eq:5.2*} and the fact that $\Delta_{\mu}^k\omega=0$,   we have
		\begin{align*}
			\langle \Delta^k\omega,u\rangle_{L^2(\bigwedge^kM)}=0.
		\end{align*}
In particular,
\begin{align*}
	\langle \Delta^k\omega,\omega\rangle_{L^2(\bigwedge^kM)}= 0.
\end{align*}
Let $\omega=\sum_{m=1}^\infty c_m\psi_m$, where $c_m\in\R$ and $\left\{\psi_{m}\right\}_{m=1}^{\infty}$ is an orthonormal basis of $L^2(\bigwedge^kM)$. Then for all $u\in  \operatorname{dom}(\mathcal{E})$, 
	\begin{align*}
	\langle -\Delta^k\omega,u\rangle_{L^2(\bigwedge^kM)}=\mathcal{E}(\omega,u)=\sum_{m=1}^\infty\mathcal{E}(c_m\psi_m,u)= \Big\langle-\sum_{m=1}^\infty \lambda_mc_m\psi_m ,u\Big\rangle_{L^2(\bigwedge^kM)}.
	\end{align*}
	 Hence $\Delta^k\omega=\sum_{m=1}^\infty \lambda_mc_m\psi_m$ and therefore, as $\lambda_m\geq 0$ for all $m$, we have
\begin{align*}
	0&=	\langle \Delta^k\omega,\omega\rangle_{L^2(\bigwedge^kM)}=	\sum_{m=1}^\infty\lambda_mc_m^2\langle \psi_m,\psi_m\rangle_{L^2(\bigwedge^kM)}=\sum_{m=1}^\infty \lambda_mc_m^2.
\end{align*}
Assume that the 0-eigenvalue space is $d$-dimensional. For $m\geq d+1$, we have $\sum_{m=d+1}^\infty  \lambda_mc_m^2\\=0$ and  $\lambda_m>0$ for all $m\geq d+1$. Hence $c_m=0$ for all $m\geq d+1$. For $m=1,\ldots,d$, we have $\omega=\sum_{m=1}^{d}c_m\psi_m$. Hence $\Delta^k\omega=0$, and thus $\omega\in \mathcal{H}^k(M)$. This proves that $\mathcal{H}^k_{\mu}(M)\subseteq\mathcal{H}^k(M)$.

\noindent{\em Step 2.} Next, we show $\mathcal{H}^k(M)\subseteq\mathcal{H}^k_{\mu}(M)$. Let $\omega\in \mathcal{H}^k(M)$, i.e., $\omega\in \operatorname{dom}(\Delta^k)$ and $\Delta^k\omega=0$. Then $\omega\in \operatorname{dom}(\Delta_{\mu}^k)$. By \eqref{eq:5.2*}, for any $u\in \operatorname{dom}(\mathcal{E})$, $\langle \Delta_{\mu}^k\omega,u\rangle_{\mu}=0$.
In particular,
\begin{align*}
	\langle \Delta_{\mu}^k\omega,\omega\rangle_{\mu}=0.
\end{align*}
Applying a similar argument as that in  Step 1 shows that $\omega\in \mathcal{H}_\mu^k(M)$. 

\noindent{\em Step 3.} We now show $\mathcal{H}^k(M)\subseteq \widetilde{\mathcal{H}}^k(M)$. Let $\omega\in \mathcal{H}^k(M)$. Since $\operatorname{dom}(\Delta^k)\subseteq \operatorname{dom}(\mathcal{E})$, we have $\omega\in W^{1,2}_0(\bigwedge^kM)$.  By \eqref{eq:5.2*}, we have
 $\langle d\omega,du\rangle_{L^2(\bigwedge^kM)}=0$ and $\langle d^*\omega,d^*u\rangle_{L^2(\bigwedge^kM)}=0$. In particular, $\langle d\omega,d\omega\rangle_{L^2(\bigwedge^kM)}=0$ and $\langle d^*\omega,d^*\omega\rangle_{L^2(\bigwedge^kM)}=0$. Hence $\langle d\omega,d\omega\rangle_g=0$ and $\langle d^*\omega,d^*\omega\rangle_g=0$ $V_g$-a.e. Therefore $d\omega=d^*\omega=0$  $V_g$-a.e., proving that $\omega\in \widetilde{\mathcal{H}}^k(M)$.

\noindent{\em Step 4.} Finally we show $\widetilde{\mathcal{H}}^k(M)\subseteq \mathcal{H}^k(M)$. Let $\omega\in \widetilde{\mathcal{H}}^k(M)$.  i.e., $\omega\in W^{1,2}_0(\bigwedge^kM)$ and $d\omega=d^*\omega=0$. Then  for any $u\in \operatorname{dom}(\mathcal{E})$,
\begin{align*}
	\int_M \langle d\omega,du\rangle_g\,dV_g+\int_M \langle d^*\omega,d^*u\rangle_g\,dV_g=0.
\end{align*}
Hence, taking $0\in L^2(\bigwedge^kM,\mu)$, we have
\begin{align*}
	\int_M \langle d\omega,du\rangle_g\,dV_g+\int_M \langle d^*\omega,d^*u\rangle_g\,dV_g=\langle 0,u\rangle_{L^2(\bigwedge^kM)},\quad \forall\, u\in \operatorname{dom}(\mathcal{E}).
\end{align*}
By \eqref{no1}, we have $\omega\in \operatorname{dom}(\Delta^k)$.  By \eqref{eq:5.2*}, we have $\langle \Delta^k\omega,u\rangle_{L^2(\bigwedge^kM)}=0$.  In particular, we have $\langle \Delta^k\omega,\omega\rangle_{L^2(\bigwedge^kM)}=0$.
Using a similar argument as that in  Step 1 shows that $\Delta^k\omega=0$, and thus $\omega\in \mathcal{H}^k(M)$.

Combining the four steps above,  we have $\mathcal{H}^k_{\mu}(M)=\mathcal{H}^k(M)=\widetilde{\mathcal{H}}^k(M).$ 
\end{proof}

	\begin{proof}[Proof of the Theorem \ref{thm:1.3}] Combining Proposition \ref{prop:1} and the Hodge theorem (see \cite[Theorem 7.4.4]{Morrey_1966}) completes the proof.
		\end{proof}

The following example shows that  Theorem \ref{thm:1.3} need not hold if $\mu$ is not absolutely continuous.

\begin{exam}\label{exam:1}
	Let $\mathbb{T}^2=\mathbb{S}^1\times\mathbb{S}^1$ be the 2-torus, where $\mathbb{S}^1$ is the 1-sphere. Let $\mu$ be the Dirac measure on $\mathbb{T}^2$. Then $\mathcal{H}^1(\mathbb{T}^2)\neq\mathcal{H}^1_{\mu}(\mathbb{T}^2)$.
\end{exam}
\begin{proof}Let $\underline{\mathcal{H}}^1(\mathbb{T}^2):=\{\omega\in\Gamma^\infty(\bigwedge^1\mathbb{T}^2):\Delta^k\omega=0\}$. Since $\Gamma^\infty(\bigwedge^1\mathbb{T}^2)\subseteq W^{1,2}_0(\bigwedge^1\mathbb{T}^2)$, we have
	\begin{align}\label{eq:h2}
		\underline{\mathcal{H}}^1(\mathbb{T}^2)\subseteq \mathcal{H}^1(\mathbb{T}^2).
	\end{align}
	By the classical Hodge theorem (see, e.g., \cite[Theorem 47]{Petersen_2016}), we have $\underline{\mathcal{H}}^1(\mathbb{T}^2)\cong H^1_{\rm{dR}}(\mathbb{T}^2;\R)$, where $\cong$ denotes vector space isomorphism and $H^1_{\rm{dR}}(\mathbb{T}^2;\R)$ is the first de Rham cohomology of $\mathbb{T}^2$ . Since $H^1_{\rm{dR}}(\mathbb{T}^2;\R)\cong \R^2$ (see, e.g., \cite[Section 28]{Tu_2011}), we have $\dim (\underline{\mathcal{H}}^1(\mathbb{T}^2))=2$.
Combining this with \eqref{eq:h2}, we have
\begin{align}\label{eq:h1}
	\dim (\mathcal{H}^1(\mathbb{T}^2))\geq2.
\end{align}
	Note that if $\omega=\eta$ $\mu$-a.e., then $\|\omega-\eta\|_\mu=0$. It follows that  $\dim (L^2(\bigwedge^1\mathbb{T}^2,\mu))=1$. Since $\mathcal{H}^1_{\mu}(\mathbb{T}^2)$ is a subspace of $L^2(\bigwedge^1\mathbb{T}^2,\mu)$, 	we have, by \eqref{eq:h1}, $\mathcal{H}^1(\mathbb{T}^2)\neq\mathcal{H}^1_{\mu}(\mathbb{T}^2)$.
\end{proof}

	\end{document}